\newtheorem{theo}{Theorem}[section]
\newtheorem{lem}{Lemma}[section]
\newtheorem{prop}{Proposition}[section]
\newtheorem{ass}{Assumption}[section]
\theoremstyle{definition}
\theoremstyle{remark}
\newcommand\eps{\varepsilon}
\newcommand\de{\delta}
\newcommand\ph{\varphi}
\newcommand\la{\lambda}
\def \p {{\partial}}
\def \bu {\bar{u}}
\def \bq {\bar{q}}
 \def \fvm {\frac{\partial \bar{q}_2(0)}{\partial V_{m}}}
\def \fqk {\frac{\partial \bar{q}_2(0)}{\partial k}}
\begin{document}

\title{On the role of the epithelium in a model of sodium exchange in renal tubules}
\author[1,2]{Marta Marulli}
\author[3]{Aur\'elie Edwards}
\author[2]{Vuk Mili\v{s}i\'{c}}
\author[2]{Nicolas Vauchelet}
\affil[1]{LAGA,  UMR 7539, CNRS, Universit\'e Sorbonne Paris Nord, 99, avenue Jean-Baptiste Clément 93430 Villetaneuse - France.}
\affil[2]{University of Bologna, Department of Mathematics, Piazza di Porta S. Donato 5, 40126 Bologna, Italy.}
\affil[3]{Department of Biomedical Engineering, Boston University, Massachusetts.}

\date{}                     
\setcounter{Maxaffil}{0}
\renewcommand\Affilfont{\itshape\small}
\maketitle

\begin{abstract}
In this study we present a mathematical model describing the transport of sodium in a fluid circulating in a counter-current tubular architecture, which constitutes a simplified model of Henle's loop  in a kidney nephron. 
The model explicitly takes into account the epithelial layer at the interface between the tubular lumen and the surrounding interstitium. In a specific range of parameters, we show that explicitly accounting for transport across the apical and basolateral membranes of epithelial cells, instead of assuming a single barrier, affects the axial concentration gradient, an essential determinant of the urinary concentrating capacity.
We present the solution related to the stationary system, and we perform numerical simulations to understand the physiological behaviour of the system. 
We prove that when time grows large, our dynamic model converges towards the stationary system at an exponential rate. 
In order to prove rigorously this global asymptotic stability result, we study eigen-problems of an auxiliary linear operator and its dual. \\

\end{abstract}

\textbf{Key words:} Counter-current, transport equation, ionic exchange, stationary system, eigenproblem, long-time asymptotics.


\section{Introduction}
One of the main functions of the kidneys is to filter metabolic wastes and toxins from plasma and excrete them in urine. The kidneys also play a key role in regulating the balance of water and electrolytes, long-term blood pressure, as well as acid-base equilibrium. The structural and functional units of the kidney are called nephrons, which number about 1 million in each human kidney \cite{bertram}.

 Blood is first filtered by glomerular capillaries and then the composition of the filtrate varies as it flows along different segments of the nephron~:  
 the proximal tubule, 
 Henle's loop (which is formed by a descending limb and an ascending limb), 
 the distal tubule, and 
 the collecting duct. The reabsorption of water and solutes from the tubules into the surrounding interstitium (or secretion in the opposite direction) allows the kidneys to match precisely urinary excretion to the dietary intake \cite{palmer}. \\

In the last decade, several groups have developed sophisticated models of water and electrolyte transport in the kidney. These models can be broadly divided into 2 categories: (a) detailed cell-based models that incorporate cell-specific transporters and predict the function of small populations of nephrons at steady-state (\cite{nieves}; \cite{weinstein}; \cite{laytonvallonedwards}; \cite{weinstein2017}; \cite{edwardsauberson}), and (b) macroscale models that describe the integrated function of nephrons and renal blood vessels but without accounting for cell-specific transport mechanisms (\cite{magali}, \cite{tesp}; \cite{mossthomas}; \cite{fry}; \cite{clemmer}; \cite{hallow}). These latter models do not consider explicitly the epithelial layer separating the tubule lumen from the surrounding interstitium, and represent the barrier as a single membrane. We developed the model presented below to assess the impact of this set of assumptions. \\
Specifically, in this study we present a simplified mathematical model of solute transport in Henle's loop. This model accounts for ion transport between the lumen and the epithelial cells, and between the cells and the interstitium. The aim of this work is to evaluate the impact of explicitly considering the epithelium on predicted solute concentration gradients in the loop of Henle.


In our simplified approach, the loop of Henle is represented as two tubules in a counter-current arrangement, the descending and ascending limb are considered to be rigid cylinders of length $L$ lined by a layer of epithelial cells. Water and solute reabsorption from the luminal fluid into the interstitium proceeds in two steps~: water and solutes cross first the apical membrane at the lumen-cytosol interface and then the basolateral membrane at the cytosol-interstitium interface, \cite{weinkra}. 
A schematic representation of the model is given in Figure \ref{fig:1}.

\begin{figure}	\centering
	\includegraphics[scale=0.45]{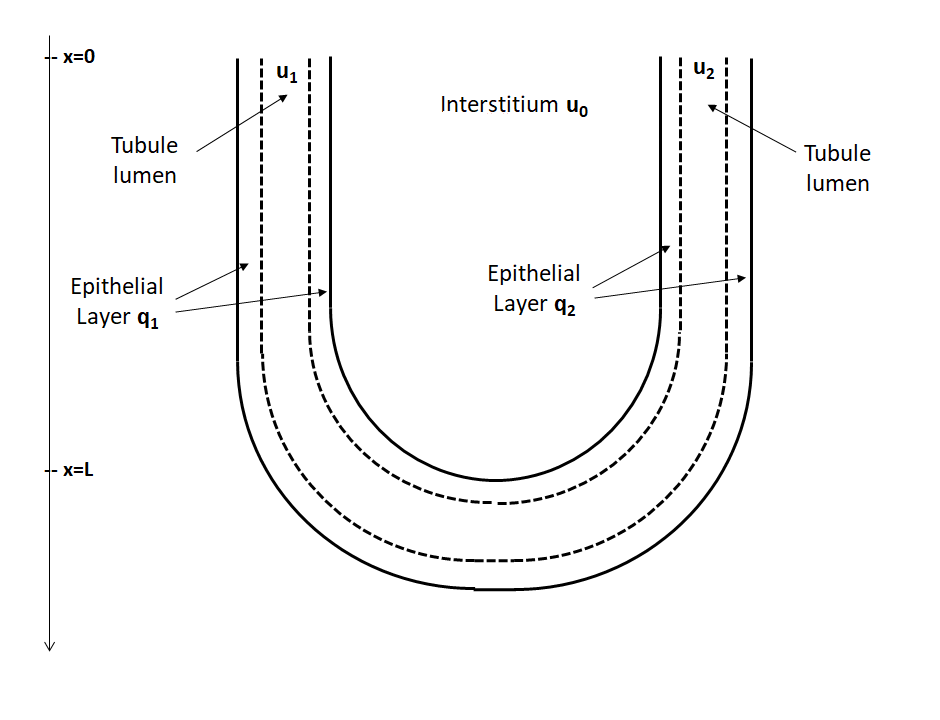}
	\caption{Simplified model of loop of Henle. $q_1/q_2$ and $u_1/u_2$ denote solute concentration in the epithelial layer and lumen of the descending/ascending limb, respectively.}
	\label{fig:1}
\end{figure}
The energy that drives tubular transport is provided by Na$^+$/K$^+$-ATPase, an enzyme that couples the hydrolysis of ATP to the pumping of sodium (Na$^+$) ions out of the cell and potassium (K$^+$) ions into the cell, across the basolateral membrane. The electrochemical potential gradients resulting from this active transport mechanism in turn drive the passive transport of ions across other transporters, via diffusion or coupled transport. We refer to diffusion as the biological process in which a substance tends to move from an area of high concentration to an area of low concentration \cite{shultz,thomas}.
As described in \cite{keener}, in the absence of electrical forces, the diffusive solute flux from compartment 1 to compartment 2 (expressed in $[mol.m^{-1}.s^{-1}]$) is given by:
$$J_{\text{diffusion}}=P \ell (u_{2}-u_{1}), $$
where $P \ [m.s^{-1}]$ is the permeability of the membrane to the considered solute, $\ell$ the perimeter of the membrane, and $u_{1}$ and $u_{2}$ are the respective concentrations of the solute in compartments $1$ and $2$.

We assume that the volumetric flow rate in the luminal fluid (denoted by $\alpha>0$) remains constant, i.e. there is no transepithelial water transport. 
The descending limb is in fact permeable to water, but we make this simplifying assumption in order to facilitate the mathematical analysis. Given the counter-current tubular architecture, this flow rate has a negative value in the ascending limb.

The present model focuses on tubular Na$^+$ transport.  The concentration of Na$^+$ ($[mol.m^{-3}$]) is denoted by $u_1$ and $u_2$, respectively, in the lumen of the descending and ascending limb, by $q_1$ and $q_2$ in the epithelial cells of the descending and ascending limbs, respectively, and by $u_0$ in the interstitium. The permeability to
Na$^+$ of the membrane separating the lumen and the epithelial cell of the descending and ascending limb is denoted by $P_1$ and $P_2$, respectively. $P_{1,e}$ denotes the permeability to Na$^+$ of the membrane separating the epithelial cell of the descending limb and the interstitium; the Na$^+$ permeability at the interface between the epithelial cell of the ascending limb and the interstitium is taken to be negligible.
The re-absorption or secretion of ions generates electrical potential differences across membranes. In the present model, the impact of transmembrane potentials on Na$^+$ transport is not taken into account.

The concentrations depend on the time $t$ and the spatial position $x\in[0,L]$. 
The dynamics of Na$^+$ concentration is given by the following model on $(0,+\infty)\times (0,L)$
\begin{align}
&a_1 \frac{\partial u_1}{\partial t}+\alpha\frac{\partial u_1}{\partial
x}=J_1, 
\quad a_2 \frac{\partial u_2}{\partial t}-\alpha\frac{\partial u_2}{\partial
x}=J_2, \label{problem1} \\
&a_3\frac{\partial q_{1}}{\partial t}=J_3, 
\quad a_4 \frac{\partial q_{2}}{\partial t}=J_4,
\quad a_0 \frac{\partial u_{0}}{\partial t}=J_0.  \label{problem2} 
\end{align}
The parameters $a_i$, for $i=0,1,2,3,4$, denote positive constants defined as: 
$$
a_1=\pi r_1^2, \ a_2=\pi r_2^2, \ a_3=\pi (r_{1,e}^2-r_{1}^2), \ a_4=\pi( r_{2,e}^2-r_{2}^2), \ a_0=\pi\Bigl( \frac{r_{1,e}^2+r_{2,e}^{2}}{2}\Bigl).
$$
In these equations, $r_i$, $i=1,2$, denotes the inner radius of tubule $i$, whereas $r_{i,e}$ denotes the outer radius of tubule $i$, which includes the epithelial layer.
The fluxes $J_i$ describe the ionic exchanges between the different domains.
They are modeled in the following way:
\begin{description}
 \item \textbf{Lumen.} In the lumen, we consider the diffusion of Na$^+$ towards the epithelium. Then,
$$
J_1=2\pi r_1 P_{1} (q_{1}-u_1), \quad J_2=2\pi r_2 P_{2} (q_{2}-u_2).
$$
\item \textbf{Epithelium.} We take into account the diffusion of Na$^+$ from the descending limb epithelium towards both the lumen and the interstitium,
$$
J_3=2\pi r_1 P_{1} (u_1-q_{1}) + 2\pi r_{1,e} P_{1,e} (u_{0}-q_{1}).
$$
In the ascending limb (tubule 2), we also consider the active
reabsorption that is mediated by Na$^+$/K$^+$-ATPase, which pumps 3 Na$^+$ ions out of the cell in exchange for 2 K$^+$ ions. 

The net flux into the ascending limb epithelium is given by the sum of the diffusive flux from the lumen and the export across the pump, which is described using Michaelis-Menten kinetics, \cite{keener}:
$$
J_4=2\pi r_2 P_{2} (u_2-q_{2}) -2\pi r_{2,e} G(q_{2}),
$$
where
$$
G(q_{2}) = V_{m}\left[\frac{q_{2}}{K_{M,2}+q_{2}}\right]^3.
$$
The exponent of $G$ is related to the number of exchanged sodium ions.
The affinity of the pump $K_{M,2}$, and its maximum velocity $V_{m}$, are given real numbers.
We notice that when $q_2 \rightarrow +\infty$, then $G(q_2)\rightarrow  V_{m}$ which is in accordance with the biological observation that the pump can be saturated.
\item \textbf{Interstitium.} 
$$
J_0=2\pi r_1 P_{1,e} (q_{1}-u_{0}) + 2\pi r_{2,e} G(q_{2}).
$$
\end{description}
We model the dynamics of a solute (here sodium) by the evolution of its concentration in each tubule. The transport of solute and its exchange are then modelled by a hyperbolic PDE system at constant speed with a non-linear transport term and with specific boundary conditions.

The dynamics of ionic concentrations is given by the following model:
\begin{equation}\label{problem}
\begin{cases}
a_1 \p_{t}u_1(t,x) + \alpha\p_{x} u_1(t,x)=J_1(t,x) \\
a_2 \p_{t}u_2(t,x) - \alpha\p_{x} u_2(t,x)=J_2(t,x) \\
a_3 \p_{t}q_{1}(t,x)=J_{3}(t,x) \\
a_4 \p_{t}q_{2}(t,x)=J_{4}(t,x) \\
a_0 \p_{t}u_0(t,x)=J_0(t,x).
\end{cases}
\end{equation}
We set the boundary conditions~:
\begin{equation}\label{bond}
u_1(t,0)=u_b(t), \quad   u_2(t,L)=u_1(t,L), \quad t > 0,
\end{equation}
where $u_b$ is a given function in $L^{\infty}(\mathbb{R}^{+})\cap L^{1}_{loc}(\mathbb{R}^{+})$, which is such that $\lim_{t \rightarrow \infty} u_b(t) =\bu_b$ for some positive constant $\bu_b>0$.

Finally, the system is complemented with initial conditions
\begin{align*}
  &u_1(0,x) = u_1^0(x),\quad u_2(0,x) = u_2^0(x),\quad u_0(0,x) = u_0^0(x),\\
  &q_1(0,x) = q_1^0(x), \quad q_2(0,x) = q_2^0(x).
\end{align*}
To simplify notations in \eqref{problem}, we set $K_1:=2\pi r_{1,e}P_{1,e}$, $k_1:=2\pi r_1 P_1$, and $k_2:=2\pi r_2 P_2$. For the diffusive fluxes $J_4$ and $J_0$, we include the constant $2\pi r_{2,e}$ in the parameter $V_{m}$ and replace the parameter $V_m$ with $V_{m,2} := 2\pi r_{2,e} V_m$, such that
\begin{equation}\label{defG}
G(q_{2}) = V_{m,2}\left[\frac{q_{2}}{K_{M,2}+q_{2}}\right]^3.
\end{equation}
Moreover, the orders of magnitude of $k_1, k_2$ are the same even if their values are not definitely equal, we may assume to further simplify the analysis that $k_1=k_2=k$.
We will refer to this as the dynamic system and then \eqref{problem} reads~:
\begin{subequations}\label{five}
  \begin{equation}\label{fivea}
    a_1 \partial_{t}u_1 + \alpha\partial_{x} u_1=k(q_1-u_1) 
  \end{equation} 
  \begin{equation}\label{fiveb}
    a_2 \partial_{t}u_2 - \alpha\partial_{x} u_2=k(q_2-u_2) 
  \end{equation} 
  \begin{equation}\label{fivec}
    a_3 \partial_{t}q_1=k(u_{1}-q_1) + K_1(u_{0}-q_1) 
  \end{equation} 
  \begin{equation}\label{fived}
    a_4 \partial_{t}q_2=k(u_{2}-q_2) - G(q_2)
  \end{equation} 
  \begin{equation}\label{fivee}
    a_0 \partial_{t}u_0=K_1(q_1-u_0) + G(q_2).
  \end{equation}
\end{subequations}
The existence and uniqueness of vector solution $\mathbf{u}=(u_1, u_2, q_1, q_2, u_0)$ to this system are investigated in \cite{MMV}.
 Several previous works have neglected the epithelium region (see e.g. \cite{tesp,magali}).
 The first goal of this work is to study the effects of this region in the mathematical model. The main indicator quantifying these effects is the parameter $k$ which accounts for the permeability between the lumen and the epithelium. Then, we analyse the dependency between the concentrations and $k$.
In the absence of physiological perturbations, the concentrations are very close to the steady state, thus it seems reasonable to consider solutions of \eqref{five}  at equilibrium, which leads us to study the system~:
\begin{equation}\label{stat}
\begin{cases}
+\alpha\p_x \bu_1 = k(\bq_1-\bu_1) \\
- \alpha\p_x \bu_2= k (\bq_2-\bu_2) \\
0= k(\bu_1-\bq_1) +K_1 (\bu_0-\bq_1) \\
0= k(\bq_2-\bu_2) - G(\bq_2) \\
0= K_1(\bq_1-\bu_0) + G(\bq_2)\\
\bu_1(L)=\bu_2(L), \quad \bu_1(0)=\bu_b.
\end{cases}
\end{equation}
Section \ref{sec:stat} concerns the analysis of solutions to stationary system \eqref{stat}. In particular, we study their qualitative behaviour and their dependency with respect to the parameter $k$. Our mathematical observations are illustrated by some numerical computations.

The second aim of the paper is to study the asymptotic behaviour of the solutions of \eqref{five}.
In Theorem \ref{timeconvergence}, we show that they converge as $t$ goes to $+\infty$ to the steady state solutions solving  \eqref{stat}.
Section \ref{sec:longtime} is devoted to the statement and the proof of this convergence result.
Finally, an Appendix provides some useful technical lemmas.

\section{Stationary system}
\label{sec:stat}

In this section, after proving basic existence and uniqueness results,  
we investigate how solutions of \eqref{stat} 
depend upon the parameter $k$. We recall that it includes also the permeability parameter as $k=k_i$ with $k_i:=2\pi r_i P_i$, $i=1,2$.
%
%
In order to study the qualitative behaviour of these solutions, we then perform some numerical simulations. 

\subsection{Stationary solution}

We first show existence and uniqueness of solutions to the stationary system:
\begin{lem}\label{lem:stat}
  Let $\bu_b>0$.
  Let $G$ be a $C^2$ function, uniformly Lipschitz, such that $G''$ is uniformly bounded and $G(0)=0$ (e.g. the function defined in \eqref{defG}).
  Then, there exists an unique vector solution to the stationary problem \eqref{stat}.

   Moreover, if we assume that $G>0$ on $\mathbb{R}^+$, then we have the following relation
  $$
  \bq_2 < \bu < \bq_1 < \bu_0.
  $$
\end{lem}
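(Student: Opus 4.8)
The plan is to collapse the five-component boundary value problem to a single scalar initial value problem by first eliminating the algebraically determined unknowns and then exploiting a conservation identity. For the elimination, I would read the steady-state form of \eqref{fived}, which ties $\bq_2$ to $\bu_2$ through $k(\bu_2-\bq_2)=G(\bq_2)$, that is $H(\bq_2)=k\bu_2$ with $H(s):=ks+G(s)$. Adding the steady-state forms of \eqref{fivec} and \eqref{fivee} makes the $K_1$-terms cancel and yields $k(\bq_1-\bu_1)=G(\bq_2)$, hence $\bq_1=\bu_1+G(\bq_2)/k$; equation \eqref{fivee} then gives $\bu_0=\bq_1+G(\bq_2)/K_1$. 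Thus, as soon as $H$ is invertible, all of $\bq_1,\bq_2,\bu_0$ become explicit Lipschitz functions of $(\bu_1,\bu_2)$.

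The decisive structural observation is a conservation law. Summing the five steady-state equations \eqref{fivea}--\eqref{fivee}, every exchange term cancels against its partner (the lumen--epithelium $k$-fluxes, the epithelium--interstitium $K_1$-fluxes, and the two $G(\bq_2)$ contributions all drop out), leaving only $\alpha\,\p_x(\bu_1-\bu_2)=0$. Therefore $\bu_1-\bu_2$ is constant on $[0,L]$, and the boundary condition $\bu_1(L)=\bu_2(L)$ in \eqref{stat} forces this constant to vanish, so $\bu_1\equiv\bu_2=:\bu$ — precisely the common value appearing in the stated inequality. Substituting back, either surviving transport equation reduces to the scalar initial value problem $\alpha\,\bu'=G(\bq_2(\bu))$, $\bu(0)=\bu_b$, where $\bq_2(\bu)=H^{-1}(k\bu)$. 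Since $G$ is Lipschitz and $H^{-1}$ is Lipschitz, the right-hand side is Lipschitz in $\bu$, so Cauchy--Lipschitz gives a unique solution on all of $[0,L]$; reconstructing $\bq_1,\bq_2,\bu_0$ from the formulas above then yields existence and uniqueness for the full system. Note that the conservation law turns the boundary value problem into a genuine initial value problem for free.

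For the ordering, assume now $G>0$ on $\mathbb{R}^+$. I would first secure positivity: since $H(0)=G(0)=0$ and $H$ is increasing, $\bu>0$ gives $\bq_2=H^{-1}(k\bu)>0$, hence $G(\bq_2)>0$ and $\bu'>0$; as $\bu(0)=\bu_b>0$, a short continuity argument shows $\bu$ is increasing and stays $\ge\bu_b>0$ on $[0,L]$, so that $\bq_2>0$ and $G(\bq_2)>0$ throughout. The inequality chain is then immediate from the three identities already obtained, namely $\bu-\bq_2=G(\bq_2)/k>0$, $\bq_1-\bu=G(\bq_2)/k>0$, and $\bu_0-\bq_1=G(\bq_2)/K_1>0$, which together give $\bq_2<\bu<\bq_1<\bu_0$.

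The main obstacle is the algebraic inversion defining $\bq_2$ as a function of $\bu_2$: the entire reduction rests on $H(s)=ks+G(s)$ being a strictly increasing bijection with Lipschitz inverse. For the pump nonlinearity \eqref{defG} this is automatic, since $G'\ge 0$ gives $H'\ge k>0$; for a general admissible $G$ one would instead require $k$ to exceed the Lipschitz constant of $G$ to guarantee monotonicity of $H$. Once this monotonicity is in hand, everything downstream — the Lipschitz dependence of the reduced right-hand side, positivity through $H(0)=0$, and the sign of each difference — follows cleanly.
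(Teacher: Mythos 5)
Your proof is correct and follows essentially the same route as the paper: sum the five stationary equations and use $\bu_1(L)=\bu_2(L)$ to get $\bu_1=\bu_2=\bu$, eliminate $\bq_1,\bq_2,\bu_0$ algebraically, reduce to a scalar Lipschitz ODE solved by Cauchy--Lipschitz, and read off the ordering from the explicit differences $G(\bq_2)/k$ and $G(\bq_2)/K_1$. The one variation is that you pose the ODE for $\bu$ (with initial datum $\bu_b$ directly) rather than for $\bq_2$ as in \eqref{reducedsystem}--\eqref{eq:q20}, which spares you the hypothesis that $G''$ is bounded; both arguments rest equally on the strict monotonicity of $s\mapsto s+G(s)/k$, a point the paper asserts without comment and which you rightly flag as holding automatically for \eqref{defG} (where $G'\ge 0$) but requiring $k>\mathrm{Lip}(G)$ for a general admissible $G$.
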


\begin{proof}
Summing up all the equations of system \eqref{stat}, we deduce that $\alpha ( \p_x \bu_1 - \p_x \bu_2 )= 0 $. From the boundary condition $\bu_1(L)=\bu_2(L)$, we obtain $\bu_1=\bu_2=\bu$. Therefore, we may simplify system \eqref{stat} in
\begin{equation}\label{eq}
 \begin{cases}
\alpha\p_{x} \bu=k(\bu-\bq_2) \\
2\bu=\bq_1+\bq_2 \\
0=k(\bu-\bq_{1}) + K_{1}(\bu_{0}-\bq_{1}) \\
0=k(\bu-\bq_{2}) - G(\bq_{2})\\
0=K_{1}(\bq_1-\bu_0) + G(\bq_2).
\end{cases}
\end{equation}
By the fourth equation of \eqref{eq},  $ \bu=\bq_2+\frac{G(\bq_2)}{k}$, inserted into the first equation, it gives 
 $ \p_x \bu =\frac{G(\bq_2)}{\alpha}$.
We obtain a differential equation satisfied by $\bq_2$,
\begin{equation}\label{reducedsystem}
\p_x \bq_2 = \frac{  G(\bq_2)}{\left(\alpha +\frac{\alpha }{k} G'(\bq_2)\right)},
\end{equation}
 with $\alpha, k$ positive constants and provided with the initial condition $\bq_2(0)$  that  satisfies
 \begin{equation}
   \label{eq:q20}
   \bq_2(0) + \frac{G(\bq_2(0))}{k} = \bu_b.
 \end{equation}
 We first remark that $\bq_2(0) \mapsto \bq_2(0) + \frac{G(\bq_2(0))}{k}$ is a $C^2$ increasing function which takes the value $0$ at $0$ and goes to $+\infty$ at $+\infty$.
 Thus, for any $\bu_b>0$ there exists a unique $\bq_2(0)>0$ solving \eqref{eq:q20}. 
 
 By assumption, $G'$ and $G''$ are uniformly bounded, thus we check easily that the right-hand side of \eqref{reducedsystem} is uniformly Lipschitz. Therefore, the Cauchy problem \eqref{reducedsystem}--\eqref{eq:q20} admits a unique solution, which is positive (by uniqueness since $0$ is a solution).

 Then, other quantities are computed thanks to the relations:
\begin{equation}\label{concentrations}
  \bu=\bq_2+\frac{G(\bq_2)}{k}, \quad
  \bq_1=\bq_2+\frac{2 G(\bq_2)}{k}, \quad
  \bu_0=\Bigl(\frac{1}{K_{1}}+\frac{2}{k}\Bigr)G(\bq_2)+\bq_2.
\end{equation}

Moreover, by the fourth and fifth equations of system \eqref{eq} and since $G(\bq_2)>0$, we immediately deduce that $\bq_2<\bu$ and $\bq_1<\bu_0$.
Using the second equation of \eqref{eq}, we obtain the claim.
\end{proof}

\subsection{Numerical simulations of stationary solutions}
\label{subsec:num}

\begin{center}
  \begin{table}
    \begin{tabular}{ l | c | r }
      \hline
      Parameters & Description & Values \\ \hline
      L & Length of tubules & $ 2 \cdot 10^{-3} \ [m]$ \\ 
      $\alpha$ & Water flow in the tubules & $10^{-13}  \ [m^3/s]$  \\
      $r_i$ &  Radius of tubule $i=1,2$ & $10^{-5} \ [m]$  \\
      $r_{i,e}$ &  Radius of epithelium layer $i=1,2$ & $1.5 \cdot 10^{-5} \ [m]$  \\
      $ K_1 $  & $2\pi r_{1,e} P_{1,e}$  &  $\sim 2 \pi \cdot 10^{-11} \ [m^2/s]$ \\
   	  $k=k_i $  & $2\pi r_i P_{i}, \ i=1,2$ & changeable $ [m^2/s]$ \\
    
      $V_{m,2}$ & Rate of active transport & $\sim 2\pi r_{2,e}10^{-5} \ [mol.m^{-1}.s^{-1}]$\\
      $K_{M,2}$ & Pump affinity for sodium ($Na^{+}$) & $3,5 \ [mol/m^3]$ \\
      $\bu_b$ & Initial concentration in tubule $1$  & $140 \ [mol/m^3]$  \\
      \hline
    \end{tabular}
    \caption{Frequently used parameters \label{table1}}
  \end{table}
\end{center}
We approximate numerically solutions of \eqref{eq}.
Numerical values of the parameters (cf Table \ref{table1}) 
are extracted from Table 2 in \cite{hervy} and Table 1 in \cite{laytonlayton}.

Taking into account these quantities allow us to have the numerical ranges of the constants and the solution results in a biologically realistic framework.
Following the proof of Lemma \ref{lem:stat}, 
we first solve \eqref{eq:q20} thanks to a Newton method. 
Then, we solve \eqref{reducedsystem} with a fourth order Runge-Kutta method.
Finally, we deduce other concentrations $u, q_1, u_0$ using  \eqref{concentrations}.
\begin{figure}[htbp]
  \centering
  \begin{subfigure}[b]{0.45\textwidth} 
    \centering \includegraphics[width=1.1\textwidth]{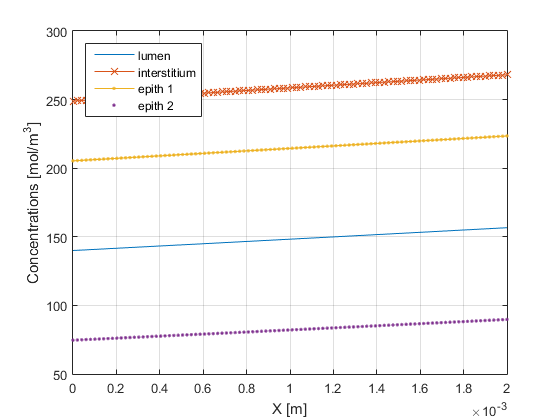}
    \caption{Concentration profiles with permeability $P_i=2\cdot 10^{-7}$ [m/s].}\label{fig:conc7}
  \end{subfigure}
  \begin{subfigure}[b]{0.45\textwidth}
    \centering \includegraphics[width=1.1\textwidth]{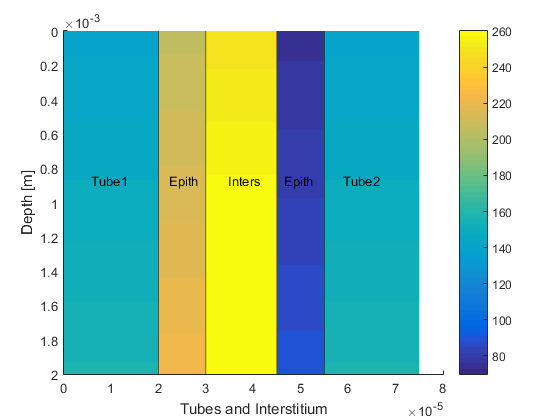}
    \caption{Concentrations in 2D  with $P_i=2\cdot 10^{-7}$. Length of lumen on vertical axis.}\label{fig:tube7}
  \end{subfigure}
  
  \begin{subfigure}[b]{0.45\textwidth}
    \centering \includegraphics[width=1.1\textwidth]{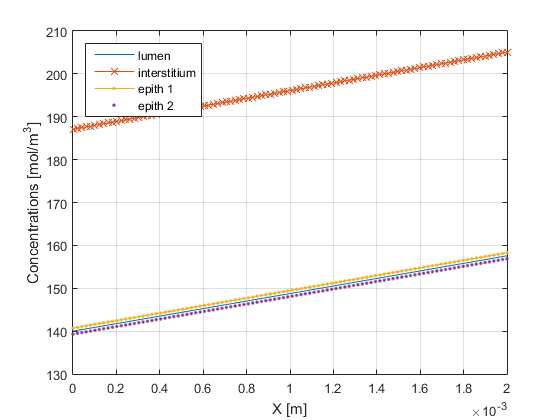}
    \caption{Concentration profiles with permeability $P_i=2\cdot 10^{-5}$ [m/s].}\label{fig:conc5}
  \end{subfigure}
  \begin{subfigure}[b]{0.45\textwidth}
    \centering \includegraphics[width=1.1\textwidth]{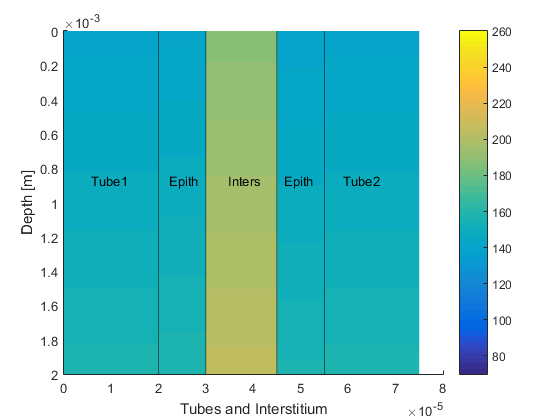}
    \caption{Concentrations in 2D  with $P_i=2\cdot 10^{-5}$. Length of lumen on vertical axis.}\label{fig:tube5}
  \end{subfigure}
  \caption{Concentration profiles for $V_{m,2}=2\pi r_{2,e} 10^{-5}$ and different permeability values.}\label{fig:profiles}
\end{figure}
        
Results from Figures \ref{fig:conc7} and \ref{fig:conc5} show that in all compartments, concentrations increase as a function of depth ($x$-axis).
Physiologically, this means that the fluid is more concentrated towards the hairpin turn ($x=L$) than near $x=0$, because of active transport in the ascending limb. 
It can also be seen that Na$^+$ concentration is higher in the central layer of interstitium and lower 
in the ascending limb epithelium owing to active Na$^+$ transport from the latter to the central compartment, 
described by the non-linear term $G(q_2)$. 
Furthermore, Figure \ref{fig:tube7} and Figure \ref{fig:tube5} highlight that increasing the permeability 
value homogenizes the concentrations in the tubules and in the epithelium region. 
Taking a very large permeability value is equivalent to fusing the epithelial layer with the adjacent lumen, 
such that luminal and epithelial concentrations become equal.
It is proved rigorously in \cite{MMV} that this occurs in the dynamic system \eqref{five}.
This is derived and explained  formally in Appendix \ref{formalcomputation}.

\newcommand{\FIC}{{\rm FIC}}
Figures \ref{fig:pompe} and \ref{fig:pompe2} depict the impact of permeability $P_1=P_2=P$ on concentration profiles for various  pump rates $V_{m,2}$. 
 Axial profiles of luminal concentrations are 
shown in Figures \ref{fig:testv5} and \ref{fig:testv4}, considering different values of the 
permeability between the lumen and the epithelium. The fractional increase in concentration (FIC)
is shown in Figures \ref{fig:percentagev5} and \ref{fig:percentagev4}~: for each permeability value (plotted on the horizontal axis), we compute the following ratio (shown on the vertical axis):
\begin{equation}\label{percentage}
\FIC(\bu):=100\frac{\bu(L)-\bu(0)}{\bu(0)},
\end{equation}
where  $\bu(L) $ is the concentration in the tubular lumen $1,2$ at $x=L$ and $\bu(0)$ the concentration at $x=0$. 
This illustrates the impact of permeability on the axial concentration gradient.
We observe that this ratio depends also strongly on the value of $V_{m,2}$.



	\begin{figure}[htbp]
	\centering
	\begin{subfigure}[b]{0.45\textwidth}
	\centering \includegraphics[width=1.1\textwidth]{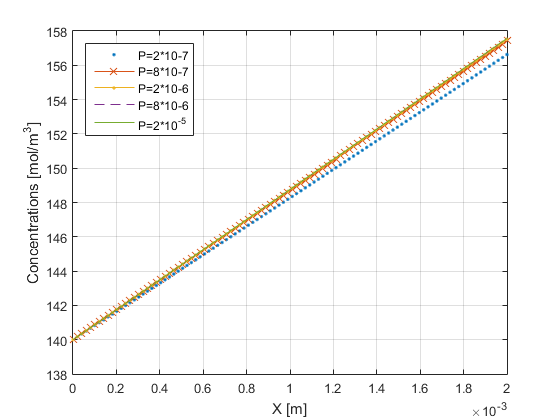}
	\caption{Axial concentrations in the lumen for different values of permeability.}\label{fig:testv5}
	\end{subfigure}
	\begin{subfigure}[b]{0.45\textwidth}
	\centering \includegraphics[width=1.1\textwidth]{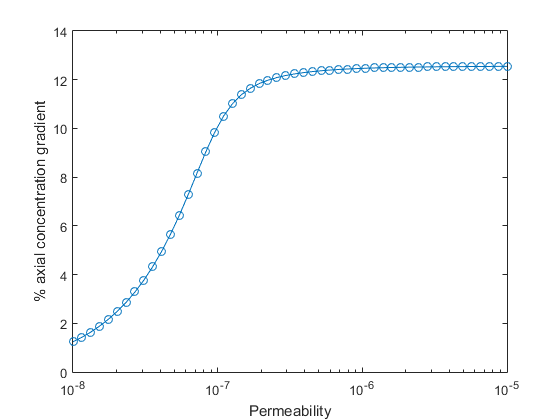}
	\caption{Fractional increase in concentration as a function of permeability.}\label{fig:percentagev5}
	\end{subfigure}
	\caption{Concentration profiles for  $V_{m,2}= 2\pi r_{2,e}\cdot 10^{-5} $  $[mol.m^{-1}.s^{-1}]$.}\label{fig:pompe}
	\end{figure}


    The permeability range (numerically $P \in [10^{-8}, 10^{-5}]$, equispaced 50 values between these) encompasses the physiological value which should be around  $10^{-7}$\ m/s.  As shown in Figures \ref{fig:percentagev5} and \ref{fig:percentagev4}, the FIC increases significantly with $P$ until it reaches a plateau~: indeed, as diffusion becomes  
        more rapid than  active transport (that is, pumping by Na$^+$/ K$^+$-ATPase), the permeability ceases to be rate-limiting. As shown by comparing Figures \ref{fig:percentagev5} and \ref{fig:percentagev4}, the FIC is strongly determined by the pump rate $V_{m,2}$~:
if $P \in [10^{-8}, 10^{-6}]$, Na$^+$ concentration along the lumen increases by less than $12\%$ if $V_{m,2}=2 \pi r_{2,e} 10^{-5}$, and may reach $120 \%$ if $V_{m,2}=2 \pi r_{2,e} 10^{-4}$. 
This raise is expected since concentration differences are generated by active transport~; the higher the rate of active transport, the more significant these differences. Conversely, in the absence of pumping, concentrations would equilibrate everywhere. 
In regards to the axial gradient, the interesting numerical results are in Figure \eqref{fig:testv4} and \eqref{fig:percentagev4}. We observe that the axial gradient increases with increasing permeability when the latter is varied within the chosen range.
Therefore this indicates that taking into account the epithelial layer in the model has a significant influence on the axial concentration gradient.
\\

Moreover, numerical results also confirm that~: $\bu_1=\bu_2=u<\bq_1<\bu_0$ as reported in Lemma \ref{lem:stat}.
We recall that we assume a constant water flow $\alpha$ which allows us to deduce $\bu_1 = \bu_2$. As noted above, the descending limb is in fact very permeable to water and $\alpha$ should decrease significantly in this tubule, such that $\bu_1$ differs from $\bu_2$, except at the hairpin turn at $x = L$. On the other hand, the last equation of system \eqref{eq} implies that $\bq_1 < \bu_0$, meaning that the concentration of Na$^+$ is lower in the epithelial cell than in the interstitium, as observed in vivo, \cite{atherton}.

With the expression of $G$ in \eqref{defG}, equation \eqref{eq:q20} reads
\begin{equation}\label{initialq2}
  \bq_2(0) + \frac{ V_{m,2}}{k}  \left( \frac{\bq_2(0)}{K_{M,2}+\bq_2(0)} \right)^{3}=\bu_b.
\end{equation}
In order to better understand the behaviour of the axial concentration gradient shown in Figures \eqref{fig:percentagev5}, \eqref{fig:percentagev4}, we compute the derivative of \eqref{initialq2} with respect to the parameter $V_m$ and with respect to $k$ respectively: 
$$
\fvm  +\frac{1}{k} G'(\bq_2(0)) \fvm  + \frac{1}{k} \bigl( \frac{\bq_2(0)}{K_{M,2}+\bq_2(0)}\bigr)^3 =0,
$$
$$
\fqk  +\frac{1}{k} G'(\bq_2(0)) \fqk  - \frac{1}{k^2} G(\bq_2(0))=0.
$$
Then, we get
$$
\fvm  =\frac{-\frac{1}{k}}{1+\frac{1}{k} G'(\bq_2(0))}   \bigl( \frac{\bq_2(0)}{k_{M}+\bq_2(0)}\bigr)^3 \leq 0 ,
$$
$$
\fqk   =\frac{\frac{1}{k^2} G(\bq_2(0))}{1+\frac{1}{k} G'(\bq_2(0))} \geq 0,
$$
because $G$ is a monotone non-decreasing function and 
$q_2(0)$ is positive.
 
We observe from numerical results  (see Figures \ref{fig:conc7}, \ref{fig:conc5}, \ref{fig:testv5}, and \ref{fig:testv4}) 
that the gradient of $u$ is almost constant.
Thus, we may make the approximation
\begin{equation}
  \label{eq:approxpu}
  \p_x \bu \sim \p_x \bu(0) = \frac{G(\bq_2(0))}{\alpha}.  
\end{equation}
Its derivatives with respect to $V_{m,2}$ and $k$ are both non negative~:
$$
\frac{\p}{\p k}[\p_x \bu] \sim \frac{G'(\bq_2(0))}{\alpha} \Bigl(\frac{\frac{G(\bq_2(0))}{k^2}}{1+\frac{1}{k}G'(\bq_2(0))}  \Bigr) \geq 0,
$$
$$
\frac{\p}{\p V_{m}}[\p_x \bu] \sim \frac{1}{\alpha} \Bigl(\frac{\bq_2(0)}{K_{M}+\bq_2(0)}\Bigr)^3 \Bigl(\frac{1}{1+\frac{1}{k}G'(\bq_2(0))}  \Bigr) \geq 0.
$$
It means that the axial concentration gradient is an increasing function both with respect to the rate of active transport $V_{m,2}$ and to the permeability $k$.

\begin{figure}[htbp]
	\centering
	\begin{subfigure}[b]{0.45\textwidth}
		\centering \includegraphics[width=1.1\textwidth]{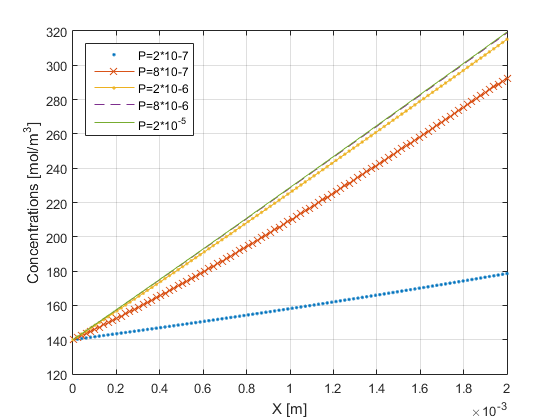}
		\caption{Axial concentrations in the lumen for different values of permeability.}\label{fig:testv4}
	\end{subfigure}
	\begin{subfigure}[b]{0.45\textwidth}
		\centering \includegraphics[width=1.1\textwidth]{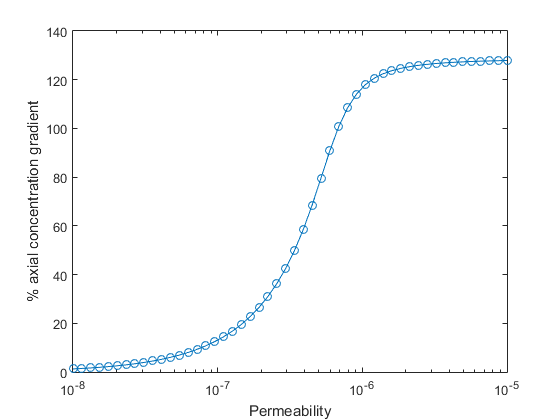}
		\caption{Percentage of concentration gradient as a function of permeability.}\label{fig:percentagev4}
	\end{subfigure}
	\caption{Concentration profiles for  $V_{m,2}= 2\pi r_{2,e}\cdot 10^{-4} $  $[mol.m^{-1}.s^{-1}]$}\label{fig:pompe2}
\end{figure}


\begin{figure}[htbp]
	\centering
	\begin{subfigure}[b]{0.45\textwidth}
		\centering \includegraphics[width=1.1\textwidth]{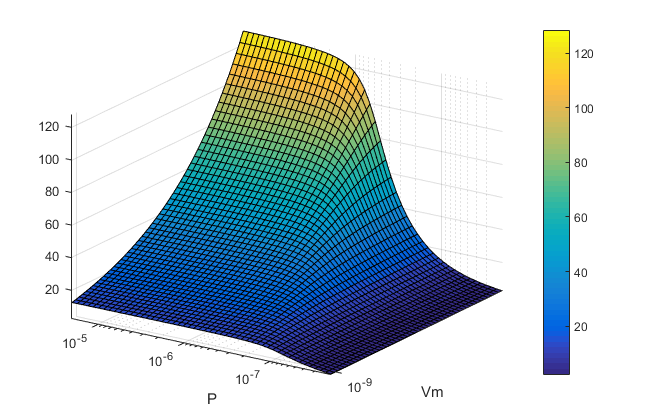}
		\caption{Percentage of concentration gradient 2D in tubules  }\label{fig:percentage2d}
	\end{subfigure}
	\begin{subfigure}[b]{0.45\textwidth}
		\centering \includegraphics[width=1.1\textwidth]{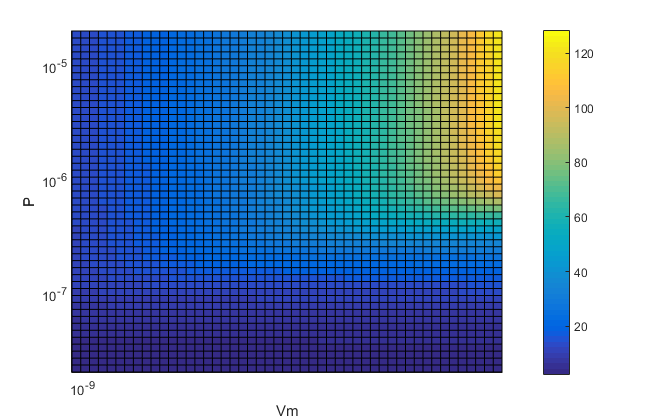}
		\caption{Percentage of concentration gradient projection}\label{fig:percentage2d2}
	\end{subfigure}
	\caption{Percentage of concentration gradient 2D with range $V_{m,2} \in 2\pi r_{2,e}\cdot( 10^{-5}, 10^{-4}) $  $[mol.m^{-1}.s^{-1}]$ ($x-$axis) and $P \in 2 \cdot(10^{-8}, 10^{-5})$ ($y-$axis)}\label{fig:}
\end{figure}
Indeed in Fig. \ref{fig:}, we perform numerical simulations varying both $P$ and $V_{m,2}$ and observe that  the ratio \eqref{percentage} increases monotonically with respect to both parameters.

\subsection{Limiting cases: $k \rightarrow \infty $, and $k \rightarrow 0$}

Numerical results show that above a certain high value of permeability, 
the epithelial concentration in tubule 2 seems to reach a plateau (see Figures \ref{fig:percentagev5} and \ref{fig:percentagev4}).
There are two different regimes : one for large values of permeabilities, one for small values of permeabilities, and a fast transition between them.

In the large permeabilities asymptotic, we may approximate system \eqref{eq} by the limiting model $k= +\infty$.
In this case, \eqref{concentrations} reduces to
\begin{equation*}
\bu=\bq_2, \quad \bq_1=\bq_2, \quad \bu_0=\frac{G(\bq_2)}{K_1}+\bq_2.
\end{equation*}
for all $x \in (0,L)$.
This is understandable from a formal point of view, also taking into account 
computations in Appendix \eqref{formalcomputation} for the stationary system \eqref{stat}.
In this case, the gradient concentration is directly proportional to $V_{m,2}$~:
\begin{equation*}
\p_x \bq_2 =\frac{G(\bq_2)}{\alpha + \frac{\alpha}{k} G'(\bq_2)} \underset{k\to +\infty}{\longrightarrow} \frac{G(\bq_2)}{\alpha}=\p_x \bu.
\end{equation*}
From \eqref{reducedsystem}, the Cauchy problem reduces to
\begin{equation*}
\p_x \bq_2(x)=\frac{G(\bq_2)}{\alpha}, \quad \bq_2(0)=\bu_b.
\end{equation*}
Additionally, it is clear that the higher pump value, the more the FIC  will increase, as observed in Figure \ref{fig:percentagev4}.

On the other hand for small values of permeability, we obtain formally
$$
\p_x \bq_2 \underset{ k\to 0}{\longrightarrow} 0, \quad \quad \p_x \bu = \frac{G(\bq_2)}{\alpha}. $$
Therefore, in a neighbourhood of the value $P \sim 10^{-8}$, the concentration gradient tends to be constant and for this reason we notice a plateau.
\section{Long time behaviour}
\label{sec:longtime}

This section is devoted to the main mathematical result of this paper concerning the long time asymptotics of solutions to \eqref{five} towards solutions to the stationary system \eqref{stat} as time goes to $+\infty$.
We first state the main result and the assumptions needed. Then, we introduce eigenelements of an auxiliary linear system and its dual problem.
Using these auxiliary functions, we are able to show the convergence when the time variable goes to $+\infty$.
A similar approach was considered in \cite{tesp} following ideas from \cite{perth0}.

\subsection{Statement of the main result}

Before stating the main result, we provide assumptions on the initial and boundary data.

\begin{ass}\label{ass.1}
 We assume that the initial solute concentrations are non-negative and uniformly bounded in $L^\infty(0,L)$ and in 
the total variation~:
\begin{equation}\label{bvcond}
0\leq  u_{1}^{0}, u_{2}^{0}, q_{1}^{0}, q_{2}^{0}, u_{0}^{0} \in BV(0,L) \cap L^{\infty}(0,L).
\end{equation}
\end{ass}

\begin{ass}\label{ass.2}
 The boundary condition of system \eqref{five}  is such that 
\begin{equation}\label{boundcond}
  0 \leq u_b  \in  L^{\infty}(\mathbb{R}^+)\cap L^1_{loc}(\mathbb{R}^+), \quad
  \lim_{t\to + \infty} |u_b - \bu_b| = 0,
\end{equation}
for some constant $\bu_b>0$.
\end{ass}

$BV$  is the space of functions with  bounded variation, 
we notice that such functions have a trace on the boundary (see e.g. \cite{evans}); 
hence the boundary condition $u_2(t,L)=u_1(t,L)$ is well-defined.

\begin{ass}\label{ass.3}
 Regularity and boundedness of $G$.
We assume that the non-linear function modelling active transport in the ascending limb (tube 2) is a bounded and Lipschitz-continuous function on $\mathbb{R}^+$~:
\begin{equation}\label{nonlin}
\forall x\in \mathbb{R}^+,\quad  0 \leq G(q_2) \leq \|G\|_\infty, \quad  0 \leq G'(q_2) \leq \|G'\|_\infty.
\end{equation}
\end{ass}
 We notice that $G$ defined by \eqref{defG} satisfies straightforwardly \eqref{nonlin}.

We now state the main result.
\begin{theo}[Long time behaviour]\label{timeconvergence}
  Under Assumptions \ref{ass.1}, \ref{ass.2} and \ref{ass.3},
  the solution to the dynamical problem \eqref{five} denoted by $\mathbf{u}(t,x)=(u_1, u_2, q_1, q_2, u_0)$ converges as time $t$ goes to $+\infty$ towards $\mathbf{\bu}(x)$, the unique solution to the stationary problem \eqref{stat}, in the following sense
  $$
  \lim_{t\to + \infty} \| \mathbf{u}(t)-\mathbf{\bu} \|_{L^{1}(\Phi)} = 0,
  $$
  with the space
  $$
  L^{1}(\Phi) =\Big\{ \mathbf{u} : [0,L] \rightarrow \mathbb{R}^{5} ; \quad
  \|\mathbf{u}\|_{L^1(\Phi)} := \int_{0}^{L} |\mathbf{u}(x)| \cdot \Phi(x)\ dx < \infty \Big\},
  $$
  where $\Phi=(\ph_1, \ph_2, \phi_1, \phi_2, \ph_0)$ is defined in Proposition \ref{prop} below.
  
  Moreover, if we assume that there exist $\mu_0>0$ and $C_0$ such that
  $|u_b(t)-\bu_b|\leq C_0 e^{-\mu_0 t}$ for all $t>0$,
  then there exist $\mu>0$ and $C>0$ such that we have the convergence with an exponential rate
  \begin{equation}\label{convergence}
    \| \mathbf{u}(t)-\mathbf{\bu} \|_{L^{1}(\Phi)} \leq C e^{- \mu t}.
  \end{equation}
\end{theo}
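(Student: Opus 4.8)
The plan is to follow the relative-entropy / dual-eigenfunction strategy of \cite{perth0,tesp}. First I would reduce the nonlinear problem to a linear transport--reaction inequality for the difference $\mathbf{w}=\mathbf{u}-\bar{\mathbf{u}}$, and then use the positive dual weight $\Phi$ of Proposition \ref{prop} as the weight of an $L^1(\Phi)$ Lyapunov functional whose dissipation produces the decay. Writing $w_i=u_i-\bu_i$ and $p_j=q_j-\bq_j$, I would subtract \eqref{stat} from \eqref{five}. The sole nonlinearity is $G(q_2)-G(\bq_2)$, which I would rewrite as $b(t,x)\,p_2$ with $b(t,x)=\int_0^1 G'(\bq_2+s\,p_2)\,ds$; Assumption \ref{ass.3} gives $0\le b\le\|G'\|_\infty$. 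The system for $\mathbf{w}$ is then linear with bounded, \emph{nonnegative} coefficient $b$, and the sign $b\ge0$ is exactly what the construction of $\Phi$ exploits.

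Next I would establish the weighted entropy inequality. Multiplying each component equation by $\operatorname{sgn}(w_i)$ times its matching weight ($\varphi_1,\varphi_2,\phi_1,\phi_2,\varphi_0$), integrating over $x\in[0,L]$ and summing, I would use the Kruzhkov identities $\operatorname{sgn}(w)\,\partial_t w=\partial_t|w|$ and $\operatorname{sgn}(w)\,\partial_x w=\partial_x|w|$ for the two transported components, and the elementary bound $\operatorname{sgn}(w_i)\,w_j\le|w_j|$ for every off-diagonal coupling term; the $BV$ regularity of Assumption \ref{ass.1} makes these manipulations legitimate and guarantees the boundary traces. Integrating the transport terms by parts transfers a derivative onto $\Phi$, and the defining relations of the dual eigenproblem in Proposition \ref{prop} are precisely what is needed so that all interior contributions combine into a single dissipative term. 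This yields
\[
\frac{d}{dt}\,\|\mathbf{w}(t)\|_{L^1(\Phi)} \;\le\; -\mu\,\|\mathbf{w}(t)\|_{L^1(\Phi)} + B(t),
\]
for some $\mu>0$, where $B(t)$ gathers the boundary contributions at $x=0$ and $x=L$.

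Then I would control $B(t)$. Integration by parts produces at $x=L$ a term proportional to $|w_1(t,L)|\,(\varphi_2(L)-\varphi_1(L))$, which vanishes because $\Phi$ is constructed with $\varphi_1(L)=\varphi_2(L)$ and because the matching conditions $u_2(t,L)=u_1(t,L)$, $\bu_1(L)=\bu_2(L)$ give $|w_1(t,L)|=|w_2(t,L)|$; at $x=0$ the outflow term for the ascending limb carries the favourable sign and is discarded, while the inflow term for the descending limb is $\alpha\,\varphi_1(0)\,|u_b(t)-\bu_b|=:B(t)$, controlled by Assumption \ref{ass.2}. For the first assertion, since $B(t)\to0$ and $\mu>0$, a standard Gr\"onwall-type argument gives $\|\mathbf{w}(t)\|_{L^1(\Phi)}\to0$. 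For the exponential assertion, the hypothesis $|u_b(t)-\bu_b|\le C_0 e^{-\mu_0 t}$ yields $B(t)\le C_0' e^{-\mu_0 t}$, and Gr\"onwall with any rate $\mu'<\min(\mu,\mu_0)$ gives \eqref{convergence}. Since $\Phi$ is bounded above and below by positive constants, $\|\cdot\|_{L^1(\Phi)}$ is equivalent to the plain $L^1$ norm, so the convergence holds in the stated sense.

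The main obstacle lies inside Proposition \ref{prop}: producing a strictly positive dual eigenfunction $\Phi$ that is bounded away from $0$ and $\infty$, satisfies the boundary compatibility $\varphi_1(L)=\varphi_2(L)$, and exhibits a genuine spectral gap $\mu>0$. The delicate feature is that three of the five unknowns ($q_1,q_2,u_0$) undergo no spatial transport, so their decay must be extracted entirely from the reaction coupling; one must therefore show that the weighted reaction block is strictly dissipative, and moreover uniformly in the coefficient $b\in[0,\|G'\|_\infty]$, so that a single rate $\mu$ works along the whole nonlinear trajectory rather than only for the linearization at $\bar{\mathbf{u}}$.
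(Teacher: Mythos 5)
Your overall strategy is the paper's: subtract the stationary system, take absolute values in the Kruzhkov sense, weight the differences by the dual eigenelements $\Phi$, integrate by parts, kill the boundary terms using $\varphi_2(0)=0$, $\varphi_1(L)=\varphi_2(L)$ and $|u_1-\bu_1|(L)=|u_2-\bu_2|(L)$, and conclude with Gr\"onwall; the treatment of the inflow datum $u_b$ and the two Gr\"onwall conclusions are exactly as in the paper.

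The one step you leave unresolved is the decisive one, and your middle paragraph overstates what the dual relations give you. Because the eigenproblem \eqref{dualc} is built with a \emph{constant} $g$ while your linearized coefficient is $b(t,x)=\int_0^1 G'(\bq_2+s\,p_2)\,ds$ with $p_2=q_2-\bq_2$, the interior terms do \emph{not} all recombine into a single dissipative term: after substituting \eqref{dualc} one is left with the residual $\int_0^L\bigl(g\,|p_2|-|G(q_2)-G(\bq_2)|\bigr)(\phi_2-\varphi_0)\,dx$. The paper disposes of it by choosing $g=\|G'\|_\infty$, so that $|G(q_2)-G(\bq_2)|\le g\,|p_2|$, and by invoking the additional sign property $\phi_2<\varphi_0$, which is precisely why that inequality is included in the statement of Proposition \ref{prop} and proved in Lemma \ref{lemmadual}; the residual is then nonpositive and can be dropped. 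You correctly sense in your last paragraph that uniformity in $b\in[0,\|G'\|_\infty]$ is the crux, but you frame it as needing a spectral gap for the whole reaction block, whereas the actual mechanism is this single pointwise inequality $\phi_2-\varphi_0<0$, already available to you, which should be invoked explicitly. Two smaller inaccuracies: the capacitance weights $a_i$ in \eqref{five} force the decay rate $\lambda/\max_i a_i$ rather than $\lambda$ itself; and your closing claim that $\|\cdot\|_{L^1(\Phi)}$ is equivalent to the unweighted $L^1$ norm is false, since $\varphi_2(0)=0$ means $\Phi$ is not bounded below --- the theorem asserts convergence only in the weighted norm, and that is all this argument delivers.
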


The scalar product used in the latter claim means : 
  \begin{align*}
    & \int_{0}^{L} |\mathbf{u}(x)| \cdot \Phi(x)\ dx =  \\
    & \int_{0}^{L} \bigl(|u_1|\ph_{1}(x)+|u_2|\ph_2(x)
  +|q_1|\phi_1(x)  +|q_2|\phi_2(x)+|u_0|\ph_{0}(x)\bigr)\ dx.
  \end{align*}
  The definition of the left eigenvector $\Phi$ and its role are given hereafter.

\subsection{The eigen-problem}

In order to study the long time asymptotics of the time dependent system \eqref{five}, we consider the eigen-problem associated with a specific linear system \cite{perth,tesp}.
This system is, in some sort, a linearized version of the stationary system \eqref{stat} where the derivative of the non-linearity is replaced by a constant $g$. 
When these eigenelements $(\lambda, \mathcal{U}, \Phi)$ exist, the asymptotic growth rate in time for a solution $\mathbf{u}$ of \eqref{five} is given by the first positive eigenvalue $\la$ and the asymptotic shape is given by the corresponding eigenfunction $\mathcal{U}$.

Let us introduce the eigenelements of an auxiliary stationary linear system
\begin{equation}\label{directc}
\begin{cases}
\p_x U_1=\lambda U_1 +k(Q_1- U_1) \\
-\p_x U_2=\lambda U_2 +k (Q_2-U_2) \\
0=\lambda Q_1 + k (U_1-Q_1) +K_1 (U_0-Q_1) \\
0=\lambda Q_2 + k(U_2-Q_2)-g Q_2 \\
0=\lambda U_0 + K_1(Q_1-U_0)+g Q_2, 
\end{cases}
\end{equation}
where $g$ is a positive constant which will be fixed later.
This system is complemented with boundary and normalization conditions~:
\begin{equation}\label{stabcond}
U_1(0)=0, \quad U_1(L)=U_2(L), \quad \int_{0}^{L} (U_1+U_2+Q_1+Q_2+U_0)\ dx=1 .
\end{equation}
We also consider the related dual system~:
\begin{equation}\label{dualc}
\begin{cases}
-\p_x \ph_1=\lambda  \ph_1 + k(\phi_1-\ph_1) \\
\p_x \ph_2=\lambda \ph_2 + k(\phi_2-\ph_2) \\
0=\lambda \phi_1 + k(\ph_1-\phi_1) +K_1 (\ph_0-\phi_1) \\
0=\lambda \phi_2 + k(\ph_2-\phi_2)+ g (\ph_0-\phi_2) \\
0=\lambda \ph_0 + K_1(\phi_1-\ph_0),
\end{cases}
\end{equation}
with following conditions~:
\begin{equation}\label{dualccondition}
\ph_1(L)=\ph_2(L), \quad \ph_2(0)=0, \quad \int_{0}^{L} (U_1 \ph_1+U_2 \ph_2+Q_1 \phi_1+Q_2 \phi_2+U_0 \ph_0)\,dx=1. 
\end{equation}
For a given $\lambda$, the function $\mathcal{U}:=(U_1, U_2, Q_1, Q_2, U_0)$ is the right eigenvector solving \eqref{directc}, while $\Phi:=(\ph_1, \ph_2, \phi_1, \phi_2, \ph_0)$ is the left one, associated with the adjoint operator.
The following result shows the existence of a positive eigenvalue and some properties of eigenelements.
We underline that in order to make the proof easier, we consider the case $k=k_1=k_2$ but the same result could be extended to the more general case where $k_1 \ne k_2$. 
\begin{prop}\label{prop}
  Let $g>0$ be a constant. There exists a unique $(\la, \mathcal{U}, \Phi)$ with $\la \in (0,\la_{-})$ solution to the eigenproblem \eqref{directc}--\eqref{dualccondition}, where
  $$ \displaystyle{\la_{-}= \frac{(2 K_1 + k) - \sqrt{4K_1^2+k^2}}{2}}.$$
  Moreover, we have
  $\mathcal{U}(x)>0$, $\Phi(x)>0$ on $(0,L)$ and $\phi_2<\ph_0$. 
\end{prop}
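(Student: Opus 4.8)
The plan is to eliminate the three algebraic unknowns $Q_1,Q_2,U_0$ and reduce \eqref{directc} to a planar linear ODE system in $(U_1,U_2)$. For $\lambda\in(0,\lambda_-)$ the determinant $D(\lambda):=\lambda^2-(k+2K_1)\lambda+kK_1$ of the $2\times2$ block formed by the third and fifth equations is strictly positive (its roots are exactly $\lambda_\pm=\tfrac{(2K_1+k)\pm\sqrt{4K_1^2+k^2}}{2}$), and one checks that $\lambda_-<k$, so that $k+g-\lambda>0$ as well. Hence the fourth equation gives $Q_2=\frac{k}{k+g-\lambda}U_2$, and Cramer's rule expresses $Q_1,U_0$ as positive linear combinations of $U_1$ and $Q_2$. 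Substituting back, the system becomes $\partial_x U_1=a_{11}U_1+a_{12}U_2$, $\partial_x U_2=a_{22}U_2$, with explicit rational coefficients $a_{ij}(\lambda)$; the crucial structural feature is that $U_2$ decouples and $a_{12}>0$.

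Then I would solve the reduced system explicitly. Since $U_2(x)=U_2(0)e^{a_{22}x}$ and $U_1(0)=0$, variation of constants gives $U_1(x)=a_{12}U_2(0)\int_0^x e^{a_{11}(x-y)}e^{a_{22}y}\,dy$, which is manifestly positive together with $U_2$ once $U_2(0)>0$. Imposing the remaining boundary condition $U_1(L)=U_2(L)$ and cancelling $U_2(0)$ yields a scalar dispersion relation which, writing $Z(\lambda):=a_{22}-a_{11}$, $C(\lambda):=a_{12}$ and using $\frac{1-e^{-ZL}}{Z}=\int_0^L e^{-Zt}\,dt$, can be put in the form $\psi(\lambda):=C(\lambda)\int_0^L e^{-Z(\lambda)t}\,dt-1=0$. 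A direct computation gives $a_{11}(0)=0$ and $a_{12}(0)=a_{22}(0)=\frac{kg}{k+g}$, whence $\psi(0)=-1<0$; as $\lambda\uparrow\lambda_-$ one has $D\downarrow 0$, so $C(\lambda)\to+\infty$ and $Z(\lambda)\to-\infty$, forcing $\int_0^L e^{-Z(\lambda)t}\,dt\to+\infty$ and hence $\psi(\lambda)\to+\infty$. By the intermediate value theorem there is a root $\lambda\in(0,\lambda_-)$.

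For uniqueness I would prove that $\psi$ is strictly increasing. Since $\eta(Z):=\int_0^L e^{-Zt}\,dt>0$ is strictly decreasing in $Z$, it suffices to show that $C(\lambda)$ is increasing and $Z(\lambda)$ decreasing on $(0,\lambda_-)$: then $\eta(Z(\lambda))$ is increasing and $\psi=C\,\eta(Z)-1$ is a product of positive increasing functions minus a constant. Monotonicity of $C$ is immediate because both factors $D$ and $k+g-\lambda$ in its denominator are positive and decreasing. For $Z$, differentiating term by term reduces the only non-obvious contribution to the sign of $D-(\lambda-K_1)D'$, and the algebraic identity $D-(\lambda-K_1)D'=-\big[(\lambda-K_1)^2+K_1^2\big]<0$ closes the argument. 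This identity, together with the control of the endpoint asymptotics of $\psi$, is the main technical obstacle.

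It remains to treat the dual problem and the positivity and ordering statements. Carrying out the analogous elimination for \eqref{dualc} (using $\varphi_0=\frac{K_1}{K_1-\lambda}\phi_1$, $\phi_1=\frac{k(K_1-\lambda)}{D}\varphi_1$, and the fourth equation for $\phi_2$) yields the decoupled system $\partial_x\varphi_1=-a_{11}\varphi_1$, $\partial_x\varphi_2=-a_{22}\varphi_2+a_{12}\varphi_1$ with the \emph{same} coefficients; imposing $\varphi_2(0)=0$ and $\varphi_1(L)=\varphi_2(L)$ produces exactly the same dispersion relation $\psi(\lambda)=0$, so the dual problem is solvable at the very same $\lambda$, and the variation-of-constants formula again gives $\Phi>0$ once $\varphi_1(0)>0$. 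The ordering $\phi_2<\varphi_0$ then follows from the sign structure of the last two dual equations, using $k-\lambda>0$ on $(0,\lambda_-)$, exactly in the spirit of the ordering established in Lemma \ref{lem:stat}. Finally, positivity guarantees that the normalizing integrals in \eqref{stabcond} and \eqref{dualccondition} are strictly positive, so the single remaining scalar in $\mathcal{U}$ and in $\Phi$ is fixed uniquely, which completes the proof.
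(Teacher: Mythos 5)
Your route is essentially the paper's: eliminate the algebraic unknowns $Q_1,Q_2,U_0$, reduce to a triangular ODE system in $(U_1,U_2)$ in which $U_2$ decouples, solve by variation of constants, and characterize $\lambda$ as the unique root of a monotone dispersion relation; the dual problem is treated by the same elimination. Your $\psi(\lambda)$ is exactly $F(\lambda)-1$ from the paper, your $a_{11}$, $a_{22}$, $a_{12}$ are its $c_\lambda$, $\eta(\lambda)-\lambda$ and $gk_\lambda/(1+\tfrac1k(g-\lambda))$, and your identity $D-(\lambda-K_1)D'=-\bigl[(\lambda-K_1)^2+K_1^2\bigr]$ is correct and in fact packages the monotonicity argument more cleanly than the appendix of the paper. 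One small slip: $\psi(0)\neq -1$; with $a_{11}(0)=0$ and $C(0)=Z(0)=\tfrac{kg}{k+g}$ one gets $\psi(0)=-e^{-kgL/(k+g)}$, which is still negative, so the intermediate-value argument is unaffected.

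The genuine gap is the final claim $\phi_2<\varphi_0$. It does \emph{not} follow from ``the sign structure of the last two dual equations'' together with $k-\lambda>0$, and the analogy with Lemma \ref{lem:stat} is misleading: there the ordering is immediate from $G>0$, whereas here the fourth dual equation only yields $\phi_2-\varphi_0=\bigl(k\varphi_2-(k-\lambda)\varphi_0\bigr)/(k+g-\lambda)$, whose sign is not determined by positivity of the entries. Two further ingredients are needed: (i) the relation $\varphi_2=\tfrac{U_1}{U_2}\varphi_1$ (from $(U_1\varphi_1-U_2\varphi_2)'=0$ and $U_1(0)=\varphi_2(0)=0$, or from your explicit formulas), combined with the fact that $U_1/U_2=C\int_0^x e^{-Zs}\,ds$ increases from $0$ to $1$ on $[0,L]$ --- the value $1$ at $x=L$ being precisely the dispersion relation --- so that $\varphi_2\le\varphi_1$; and (ii) the algebraic inequality $k<(k-\lambda)k_\lambda$, i.e. $D(\lambda)<(k-\lambda)K_1$, which reduces to $\lambda(K_1+k-\lambda)>0$ and hence holds on $(0,\lambda_-)$. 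Only then does $k\varphi_2\le k\varphi_1<(k-\lambda)k_\lambda\varphi_1=(k-\lambda)\varphi_0$ follow. This is exactly the content of the paper's auxiliary function $H$ and the bound $H(L)=1/k_\lambda<1-\lambda/k$; without it your proof of the ordering --- the very inequality that makes the Lyapunov computation in Theorem \ref{timeconvergence} close --- is incomplete.
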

In order to prove this result, we will divide the proof in two steps~:
 Lemmas \ref{lemmadirect} and  \ref{lemmadual} respectively. 
Proposition \ref{prop} is a direct consequence of these two Lemmas.
We start with the direct problem~:
\begin{lem}[The direct problem]\label{lemmadirect}
	There exists a unique $\la>0$ such that the direct problem \eqref{directc}-\eqref{stabcond} admits a unique positive solution    $\mathcal{U}=(U_1, U_2, Q_1, Q_2, U_0)$ on $(0,L)$, and $0<\la<\la_{-}.$
\end{lem}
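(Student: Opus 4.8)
The plan is to eliminate the three purely algebraic unknowns and reduce \eqref{directc} to a scalar dispersion relation for $\lambda$. First I would treat the last three equations of \eqref{directc} as a linear algebraic system for $(Q_1,Q_2,U_0)$ with $(U_1,U_2)$ as data. The fourth equation gives $Q_2=\frac{k}{k+g-\lambda}U_2$, while the third and fifth form a $2\times2$ system for $(Q_1,U_0)$ whose determinant is exactly
$$\Delta(\lambda)=\lambda^2-(2K_1+k)\lambda+kK_1,$$
whose smaller root is precisely $\lambda_-$. On $(0,\lambda_-)$ one checks $\Delta>0$, $K_1-\lambda>0$ and $k+g-\lambda>0$, so Cramer's rule expresses $Q_1,Q_2,U_0$ as \emph{positive} linear combinations of $U_1,U_2$. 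This is the structural reason the threshold $\lambda_-$ appears: it is the value at which the algebraic elimination, and the positivity it carries, breaks down.

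Substituting back into the two differential equations of \eqref{directc} yields a constant-coefficient system in $x$ which, crucially, decouples: the $U_2$-equation becomes autonomous, $\p_x U_2=\beta(\lambda)U_2$, so $U_2=C_2 e^{\beta x}$, while $U_1$ solves the forced linear equation $\p_x U_1=\gamma(\lambda)U_1+\delta(\lambda)U_2$ with $U_1(0)=0$. Here $\beta,\gamma,\delta$ are explicit rational functions of $\lambda$, with $\delta>0$ on $(0,\lambda_-)$. Solving by the integrating factor gives
$$U_1(x)=\frac{\delta\, C_2}{\beta-\gamma}\bigl(e^{\beta x}-e^{\gamma x}\bigr).$$
Positivity of the full eigenvector is then almost free: the quotient $\frac{e^{\beta x}-e^{\gamma x}}{\beta-\gamma}$ is positive for every $x>0$ irrespective of the sign of $\beta-\gamma$ (it is a difference quotient of the increasing function $t\mapsto e^{tx}$), so with $C_2>0$ one has $U_1,U_2>0$ on $(0,L]$, and the positive combinations above give $Q_1,Q_2,U_0>0$.

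The eigenvalue is fixed by the remaining transmission condition $U_1(L)=U_2(L)$. Dividing by $C_2 e^{\beta L}$ turns it into the scalar equation
$$G(\lambda):=\frac{\delta(\lambda)}{\beta(\lambda)-\gamma(\lambda)}\Bigl(1-e^{(\gamma(\lambda)-\beta(\lambda))L}\Bigr)=1.$$
Existence of a root in $(0,\lambda_-)$ I would obtain from the endpoints. At $\lambda=0$ a direct computation gives $\gamma(0)=0$ and $\delta(0)=\beta(0)>0$, hence $G(0)=1-e^{-\beta(0)L}\in(0,1)$. As $\lambda\to\lambda_-^{-}$, the blow-up $\Delta\to0^{+}$ forces $\gamma,\delta\to+\infty$ in such a way that $\frac{\delta}{\beta-\gamma}$ tends to a finite negative limit while $1-e^{(\gamma-\beta)L}\to-\infty$, so that $G(\lambda)\to+\infty$. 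By the intermediate value theorem there is at least one $\lambda\in(0,\lambda_-)$ with $G(\lambda)=1$; the normalization in \eqref{stabcond} then fixes $C_2$, making the eigenvector unique once $\lambda$ is known.

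The hard part is uniqueness of the eigenvalue, i.e. showing that $G$ meets the level $1$ exactly once on $(0,\lambda_-)$, and I expect this to be the main obstacle. The function $G$ is genuinely transcendental and its building blocks $\gamma,\delta$ diverge at $\lambda_-$, so a crude estimate does not settle monotonicity. The plan is to prove $G'>0$ on $(0,\lambda_-)$, either by differentiating the explicit expression and organizing the resulting terms by sign, or, more robustly, by differentiating the reduced ODE system in the parameter $\lambda$ and running a comparison/positivity argument on the sensitivity $\p_\lambda U_1(L)$, which is the technical core of the lemma. Strict monotonicity together with the endpoint values $G(0)<1$ and $G(\lambda_-^-)=+\infty$ then yields exactly one root, completing the proof.
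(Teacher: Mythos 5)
Your reduction is essentially the paper's: eliminate $(Q_1,Q_2,U_0)$ algebraically (your determinant $\Delta(\la)=\la^2-(2K_1+k)\la+kK_1$ is exactly the numerator of the paper's quantity $k-\la-\tfrac{K_1\la}{K_1-\la}$, whose vanishing defines $\la_{\pm}$), solve the autonomous equation for $U_2$ and the forced one for $U_1$ with $U_1(0)=0$, and convert the transmission condition $U_1(L)=U_2(L)$ into the scalar equation $G(\la)=1$. Your $G$ coincides with the paper's $F$ in \eqref{flambda} (with $\beta=\eta(\la)-\la$, $\gamma=c_\la$, $\delta=k_\la\,g/(1+\tfrac1k(g-\la))$), and your endpoint values $G(0)=1-e^{-\beta(0)L}<1$ and $G(\la)\to+\infty$ as $\la\to\la_-^-$ match the paper's. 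Your positivity arguments — Cramer's rule giving $Q_1,U_0$ as positive combinations of $U_1,Q_2$ for $\la<\la_-<\min(k,K_1)$, and the difference-quotient observation that $\frac{e^{\beta x}-e^{\gamma x}}{\beta-\gamma}>0$ for $x>0$ regardless of the sign of $\beta-\gamma$ — are correct and, if anything, cleaner than the paper's.

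The one genuine gap is the step you explicitly defer: strict monotonicity of $G$ on $(0,\la_-)$, which is what upgrades existence (by the intermediate value theorem) to uniqueness. As written you name two candidate strategies but carry out neither, so the lemma is not yet proved. This turns out to be less painful than you anticipate: the paper (Appendix B.1) avoids differentiating the transcendental expression wholesale by factoring
$$F(\la)=k_\la\cdot\frac{g}{1+\frac{1}{k}(g-\la)}\cdot h\bigl(c_\la+\la-\eta(\la)\bigr),\qquad h(x):=\frac{e^{xL}-1}{x},$$
and checking that each factor is positive and increasing on $(0,\la_-)$: the first two by direct differentiation of explicit rational functions (using $\la_-<\min(k,K_1)$), and the third because $h$ is positive and increasing on all of $\mathbb{R}$ while the inner argument $c_\la+\la-\eta(\la)$ is increasing in $\la$ (its constituent denominators $k-\la-\tfrac{K_1\la}{K_1-\la}$ and $k+g-\la$ both decrease on $(0,\la_-)$). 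A product of positive increasing functions is increasing, which gives $G'>0$ and hence the unique root. If you graft this factorization onto your argument, your proof is complete; without it, the uniqueness claim remains unsubstantiated.
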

\begin{proof}
  Summing all equations in \eqref{directc} we find that~:
  \begin{equation}
    U_1'-U_2'=\lambda(U_1+U_2+Q_1+Q_2+U_0) .
  \end{equation} 
  Integrating with respect to $x$ and using condition \eqref{stabcond}, we obtain $U_2(0)=\lambda$.
  By the fourth equation in \eqref{directc}, we find directly:
  \begin{equation}\label{q2} Q_2(x)=\frac{k U_2(x)}{k+g-\lambda} = \frac{U_2(x)}{1+\frac{1}{k}(g-\la)}.
  \end{equation}	
  Putting this expression into the second equation  in \eqref{directc}, we find
  $$
  -U_2'= U_2\left(\la + \frac{\la-g}{1+\frac{1}{k} (g-\la)}\right).
  $$
  Solving the latter equation, we deduce that 
\begin{equation}\label{24bis}
\begin{aligned}
  & U_2(x) =U_2(0)e^{-\la x+\int_{0}^{x} \frac{-\la+g}{1+\frac{1}{k}(g-\la)}\ dy }   = \la e^{(-\la +\eta(\la) )x};  \\
    & \text{with} \; \eta(\la) := \frac{-\la+g}{1+\frac{1}{k}(g-\la)} . 
\end{aligned}
\end{equation}
  Using the fifth equation of system \eqref{directc} we recover
  $$
  U_0(x)=\frac{K_1}{K_1-\la}Q_1(x)+\frac{g}{K_1-\la}Q_2(x).
  $$
  We inject this into the third equation to obtain
  $$
  Q_1(x)\left(k -\la -\frac{K_1 \la}{K_1-\la}\right)=\frac{g K_1}{K_1-\la}Q_2(x)+k U_1(x).
  $$
  Thanks to \eqref{q2} we write also: 
  $$
  Q_1(x)\left(k -\la -\frac{K_1 \la}{K_1-\la}  \right) = \frac{K_1 g }{K_1-\la }\frac{1}{(1+\frac{1}{k}(g-\la))} U_2(x)+k U_1(x).
  $$ 
  Taking into account the first equation of system \eqref{directc}, we obtain~:
  \begin{equation}\label{deruuno}
    U_1'(x)=c_{\la} U_1(x) + k_{\la} \frac{g}{1+\frac{1}{k} (g-\la )} U_2(x),
  \end{equation}	
  where we simplify notations by introducing~:
  \begin{equation}\label{constant}
    k_{\la}:=\frac{k\frac{K_1}{K_1-\la}}{k -\la - \frac{K_1 \la}{K_1-\la}}, \quad c_{\la}:=\la+ \frac{k(\la+\frac{K_1 \la}{K_1-\la})}{k-\la - \frac{K_1 \la}{K_1-\la}  }.
  \end{equation}
  The denominator $k-\la - \frac{K_1 \la}{K_1-\la}$ vanishes for 
  $$
  \la_{\pm}= \frac{(2 K_1 + k) \pm \sqrt{4K_1^2+k^2}}{2}.
  $$
  Obviously $\lim_{\la \rightarrow \la_{-}} k_{\la}=+\infty$ and we also have that  $0<\la_{-}< \min(K_1, k)$.
	
  Now we solve directly the ODE \eqref{deruuno} with its initial condition, we get
  $$
  U_1(x)=\frac{\la g k_{\la} }{1+\frac{1}{k}(g-\la)} \frac{e^{c_{\la}x}- e^{(\eta(\la)-\la) x} }{c_\lambda + \lambda - \eta(\lambda)}.
  $$
  We are looking for a $\la>0$ such that boundary condition $U_1(L)=U_2(L)$ is satisfied, in other words $\frac{U_1(L)}{U_2(L)}=1$, namely
  \begin{equation}
    \label{flambda}
  F(\la):=\frac{g k_{\la}}{ 1+\frac{1}{k}(g-\la) } \left( \frac{e^{(c_{\la}+\la-\eta(\la))L} - 1}{c_{\la } +\la -\eta(\la)}  \right)=1,
  \end{equation}
  where we recall that $k_{\la}$, $c_{\la}$ are defined in \eqref{constant} and $\eta(\la)$ in \eqref{24bis}.
  We remark immediately that for $\la=0$ in \eqref{constant}, we have   $k_{0}=1$, $c_{0}=0$. Then,
  $$
  F(0)= 1-\exp\Bigl(-\frac{gL}{1+\frac{g}{k} }  \Bigr) <1.
  $$
  We notice that for $k_{\la}, c_{\la}>0$, $F(\la)$ is a continuous increasing function with respect to $\la$ since the product of increasing and positive functions is still increasing (see Appendix \eqref{appendix:function} for more details).
  Moreover $\lim_{\la \rightarrow \la_{-}} F(\la) = +\infty$.
  Then it exists a unique $\la\in (0,\la_-)$ such that $F(\la)=1$. Moreover, for $0<\la<\la_{-}<\min(k,K_1) $, the functions $U_1,U_2,Q_1,Q_2,U_0$ are positive on $[0,L]$.
	
\end{proof}

\begin{lem}[The dual problem]\label{lemmadual}
  Let $\la$ and $\mathcal{U}$ be as in Lemma \eqref{lemmadirect}.
  Then, there exists $\Phi:=(\ph_1, \ph_2, \phi_1, \phi_2, \ph_0)$, the unique solution of dual problem \eqref{dualc}--\eqref{dualccondition} with $\ph_1, \ph_2, \phi_1, \phi_2, \ph_0 > 0 $.  Moreover, we have $\phi_2 < \ph_0$.
\end{lem}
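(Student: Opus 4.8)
The plan is to mirror the proof of Lemma~\ref{lemmadirect}: first eliminate the three algebraic unknowns $\phi_1,\phi_2,\ph_0$ in favour of the two transported ones $\ph_1,\ph_2$, then integrate the resulting scalar ODEs explicitly, and finally encode the boundary condition $\ph_1(L)=\ph_2(L)$ as a single scalar equation in $\la$. Concretely, the fifth equation of \eqref{dualc} gives $\ph_0=\frac{K_1}{K_1-\la}\phi_1$, and inserting it into the third equation yields $\phi_1=\frac{k(K_1-\la)}{\Delta(\la)}\ph_1$, where $\Delta(\la):=\la^2-(2K_1+k)\la+kK_1=(\la-\la_+)(\la-\la_-)$ is, up to the factor $K_1-\la$, exactly the denominator appearing in \eqref{constant}; thus the dual problem is singular at the same values $\la_\pm$, and for $0<\la<\la_-<\min(k,K_1)$ one has $\Delta(\la)>0$. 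One checks that the proportionality constant is in fact $\ph_0=k_\la\,\ph_1$ with $k_\la=\frac{kK_1}{\Delta(\la)}$ as in \eqref{constant}, while the fourth equation gives $\phi_2=\frac{k\ph_2+g\ph_0}{k+g-\la}$.

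Substituting $\phi_1$ into the first equation of \eqref{dualc} produces a closed scalar ODE $\p_x\ph_1=-c_\la\ph_1$; the identity needed here is that the coefficient $\la+k\bigl(\frac{k(K_1-\la)}{\Delta(\la)}-1\bigr)$ equals the constant $c_\la$ of \eqref{constant}, which reduces to $k(K_1-\la)-\la(2K_1-\la)=\Delta(\la)$. Hence $\ph_1(x)=\ph_1(0)e^{-c_\la x}$. Inserting $\phi_2$ and $\ph_0$ into the second equation gives the inhomogeneous linear ODE $\p_x\ph_2=(\la-\eta(\la))\ph_2+f_\la\,\ph_1(0)e^{-c_\la x}$, with $\eta(\la)$ as in \eqref{24bis} and $f_\la:=\frac{kg\,k_\la}{k+g-\la}>0$; imposing $\ph_2(0)=0$ and integrating expresses $\ph_2$ as a combination of $e^{(\la-\eta(\la))x}$ and $e^{-c_\la x}$.

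The boundary condition $\ph_1(L)=\ph_2(L)$ then reduces to a scalar equation $\tilde F(\la)=1$, and the central point, which I expect to be the main obstacle, is to recognise that $\tilde F$ coincides with the function $F$ of \eqref{flambda}. After collecting terms, the combined exponent is again $c_\la+\la-\eta(\la)$ (because $(\la-\eta(\la))+c_\la=c_\la+\la-\eta(\la)$), and the prefactor simplifies to $\frac{g k_\la}{1+\frac1k(g-\la)}$ using $k_\la=\frac{kK_1}{\Delta(\la)}$ together with $1+\frac1k(g-\la)=\frac{k+g-\la}{k}$, so that $f_\la=\frac{g k_\la}{1+\frac1k(g-\la)}$ and $\tilde F\equiv F$. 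Consequently the very $\la\in(0,\la_-)$ selected in Lemma~\ref{lemmadirect} already solves the dual equation, with no new root to find; this is the concrete expression of the fact that an operator and its adjoint share the same eigenvalue. The single free constant $\ph_1(0)$ is then fixed, and chosen positive, by the weighted normalisation in \eqref{dualccondition} (whose integrand is positive once all components are positive). Positivity of $\ph_1,\ph_0$ is then immediate, and $\ph_2>0$ follows since its explicit form is $\ph_1(0)$ times an increasing function of $x$ vanishing at $0$; hence $\phi_1,\phi_2>0$ as well.

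It remains to establish $\phi_2<\ph_0$. From $\phi_2=\frac{k\ph_2+g\ph_0}{k+g-\la}$ this is equivalent to $\ph_2<\frac{k-\la}{k}\ph_0$, and dividing the explicit expressions by $e^{-c_\la x}>0$ it reads $h(x)<\frac{(k-\la)k_\la}{k}$, where $h(x):=\frac{f_\la}{c_\la+\la-\eta(\la)}\bigl(e^{(c_\la+\la-\eta(\la))x}-1\bigr)$. Since $\frac{d}{dx}\frac{e^{sx}-1}{s}=e^{sx}>0$ for every $s\neq 0$ and $f_\la>0$, the map $h$ is increasing, and by the matching of the previous step $h(L)=\tilde F(\la)=1$, so $h(x)\le 1$ on $(0,L]$. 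Finally $\frac{(k-\la)k_\la}{k}>1$ is equivalent, via $k_\la=\frac{kK_1}{\Delta(\la)}$ and $\Delta(\la)>0$, to $\la(\la-k-K_1)<0$, which holds because $0<\la<\la_-<k+K_1$. Combining these yields $h(x)<\frac{(k-\la)k_\la}{k}$ on $(0,L)$, that is $\phi_2<\ph_0$, which completes the lemma. I anticipate that the only real work lies in the algebraic bookkeeping forcing $\tilde F\equiv F$; everything else is then a direct sign analysis.
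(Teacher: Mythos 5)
Your proof is correct, and most of it runs parallel to the paper's: the elimination of $\phi_1,\ph_0$ via $k_\la$ (your $\phi_1=\frac{k(K_1-\la)}{\Delta(\la)}\ph_1$ and $\ph_0=k_\la\ph_1$ match the paper's), the integration of the $\ph_1$ equation (your $e^{-c_\la x}$ agrees with the paper's $e^{-(\la+\beta)x}$ since $\beta=c_\la-\la$), the use of the normalisation condition to fix $\ph_1(0)>0$, and the monotonicity argument for $\phi_2<\ph_0$ (your $h$ is $k_\la$ times the paper's $H$, and your endpoint test $h(L)=1<\frac{(k-\la)k_\la}{k}$ is exactly the paper's $H(L)=\frac{1}{k_\la}<1-\frac{\la}{k}$, both reducing to $\la(\la-k-K_1)<0$). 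The one genuine divergence is how you obtain $\ph_2$ and why the eigenvalue of the direct problem also serves the dual one. The paper derives the conservation identity $(U_1\ph_1)'-(U_2\ph_2)'=0$ from the adjoint (transpose-matrix) structure in Appendix \ref{app.B2}; integrating it with $U_1(0)=\ph_2(0)=0$ gives $\ph_2=\frac{U_1}{U_2}\ph_1$ at once and makes the condition $\ph_1(L)=\ph_2(L)$ automatic. You instead solve the inhomogeneous ODE for $\ph_2$ explicitly and verify by direct bookkeeping that the compatibility condition is literally $F(\la)=1$, the same transcendental equation as in Lemma \ref{lemmadirect}. The two routes yield the identical formula for $\ph_2$, as one checks from $\frac{U_1}{U_2}=f_\la\,\frac{e^{(c_\la+\la-\eta(\la))x}-1}{c_\la+\la-\eta(\la)}$. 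The paper's argument is structural and would survive in settings where the ODEs cannot be integrated in closed form, whereas yours makes the coincidence of the direct and dual eigenvalues completely explicit without invoking the adjoint relation; both are sound.
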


\begin{proof}
  By the fifth equation of system \eqref{dualc} we have directly~:
  $$
  \ph_0=\frac{K_1}{K_1-\la}  \phi_1.
  $$
  Replacing this expression in the third equation we obtain
  $$
  (k- \la - \frac{K_1 \la}{K_1-\la}) \phi_1=k \ph_1.
  $$
  Then,
  $$
  \ph_0(x)=\frac{k \frac{K_1}{K_1-\la}}{k -\la - \frac{K_1 \la}{K_1-\la}} \ph_1(x)= k_{\la} \ph_1(x),
  $$
  where   $k_\la$ is defined in \eqref{constant}.
  Using the first equation of \eqref{dualc}, we have
  $$ 
  -\ph_1'=\ph_1\left( \la+k \left(\frac{\la +\frac{K_1\la}{K_1-\la}}{k -\la -\frac{K_1 \la}{K_1-\la}} \right) \right).
  $$
  Integrating, we obtain
  \begin{equation}\label{phi1}
    \ph_1(x)=\ph_1(0)e^{-\la x}e^{-\beta x}, \quad \beta=\beta_{\la}=\frac{ \la k(2K_1-\la)}{\la^2 -2 K_1 \la   -\la k+K_1 k}.
  \end{equation}
  We  easily check that $\beta>0$ if $0<\la<\la_{-}<K_1$.

 As shown in details in the Appendix \ref{app.B2}, 
  for all $x\in (0,L)$, $(U_1 \ph_1)'-(U_2\ph_2)'=0$.
  Integrating, we get $U_1(x) \ph_1(x) - U_2(x) \ph_2(x) = U_1(0) \ph_1(0) - U_2(0) \ph_2(0) = 0$, thanks to boundary conditions $U_1(0)=0$ and $\ph_2(0)=0$. (Notice also that taking $x=L$ in this latter relation, and using the boundary condition $U_1(L)=U_2(L)\ne 0 $, we recover $\ph_1(L)=\ph_2(L)$.)
  Therefore, we get
  \begin{equation}\label{relationc}
    \ph_2(x)=\frac{U_1(x)}{U_2(x)} \ph_1(x), \quad \forall x \in [0,L].
  \end{equation}	
  Using the fourth equation in \eqref{dualc} and thanks to \eqref{relationc}, we obtain
  $$
  0=\la \phi_2 +k \frac{U_1}{U_2} \ph_1 -k \phi_2 +
  g\ph_1 \left( \frac{K_1}{K_1-\la}\cdot\frac{k}{k -\la -\frac{K_1 \la}{K_1-\la}}  \right),
  $$
  which allows  to compute $\phi_2$~:
  $$
  \phi_2(x)=\frac{k \frac{U_1(x)}{U_2(x)} \ph_1(x)}{k -\la + g} + \frac{g}{k -\la +g}k_{\la} \ph_1(x).
  $$
  Each function depends on the first component of $\Phi$, i.e. $\ph_1(x)$, and to sum up, the following relation has been obtained:
  \begin{equation}\label{dualcsolution}
    \begin{cases}
      \ph_1(x)=\ph_1(0)e^{-\la x} e^{-\beta x} \\
      \ph_2(x)=\frac{U_1(x)}{U_2(x)} \ph_1(x) \\
      \phi_1(x)=\ph_1(x)\left(\frac{k}{k-\la -\frac{K_1 \la}{K_1-\la}}\right) \\
      \phi_2(x)=\frac{1}{k -\la + g}\left(k \frac{U_1(x)}{U_2(x)} +g k_{\la}  \right)\ph_1(x) \\
      \ph_0(x)=k_{\la} \ph_1(x),
    \end{cases}
  \end{equation}
  where  $k_{\la}, \beta$ are defined  in \eqref{constant} and \eqref{phi1}.
  Hence the sign of $\Phi$ depends on the sign of $\ph_1(0)$, 
  the other quantities and constants  being positive for $\la \in (0,\la_{-})$ and $g>0$ by assumption.
  Then, we use the normalization condition \eqref{dualccondition} and \eqref{dualcsolution} in order to show the positivity of $\ph_1(0)$.
It implies that
  \begin{equation*}
    \begin{split}
      \ph_1(0)  \int_{0}^{L}  e^{-\la x} e^{-\beta x} \left[ 2U_1(x) +Q_1(x)\left(\frac{k}{k-\la -\frac{K_1 \la}{K_1-\la}}\right) + \right. & \\
      \left. + \frac{Q_2(x)}{k +g+\la}\left(k \frac{U_1(x)}{U_2(x)} +g  k_{\la} \right)  +k_{\la} U_0(x) \right]\ dx=1. & \\
    \end{split}
  \end{equation*}
   The integral on the left hand side is positive,  thanks to properties of functions previously defined.
   Given that $\ph_1(0)$ is constant and all other quantities positive, we can conclude that $\ph_1(0)>0$. 
 

  We are left to prove that the quantity $\phi_2-\ph_0$ is negative.
  Using  \eqref{dualcsolution}, we rewrite~:
  $$
  \phi_2-\ph_0=  \frac{k k_{\la} \ph_1(x)}{k-\la+g}\left(\frac{1}{k_{\la}}\frac{U_1}{U_2 }+\frac{\la}{k} -1\right).
  $$
  From the explicit expression of $U_1$ and $U_2$, we have
  \begin{align*}
    \phi_2-\ph_0 & = \frac{ k_{\la} \ph_1(x)}{1+\frac{1}{k}(g-\la)} \left[ \frac{\int_{0}^{x} \frac{g}{1+\frac{1}{k}(g-\la)} e^{-c_{\la}(y-x)} e^{(-\la+\eta(\lambda)) y}\,dy}{e^{(-\la+\eta(\la)) x}} +\frac{\la}{k} -1 \right]  \\
                 & = \frac{ k_{\la} \ph_1(x)}{1+\frac{1}{k}(g-\la)}\left[\frac{g}{1+\frac{1}{k}(g-\la)} \Big[\frac{1-e^{-c_\la x-\la x+\eta(\lambda) x}}{c_{\la}+\la - \eta}\Big] +\frac{\la}{k} -1 \right],
  \end{align*}
  where we recall the notation $\eta(\la)=\frac{-\la +g}{1+\frac{1}{k} (g-\la)}$.
We set
\begin{equation}
H(x):= \frac{g}{1+\frac{1}{k}(g-\la)} \Big[\frac{1-e^{-c_\la x-\la x+\eta(\lambda) x}}{c_{\la}+\la - \eta(\la)}\Big].
\end{equation}
We have
$$
\phi_2-\ph_0 < 0 \iff H(x)+\frac{\la}{k} -1<0.
$$
We observe that $H(0)=0$ and $H(L)=\frac{1}{k_{\la}}$ thanks to  \eqref{flambda}. Moreover, $H(L)<1-\frac{\la}{k} $  for $\la \in (0,\la_{-})$. Indeed, we have
$$
  \la < k-\frac{k}{k_{\la}},
$$ 
which holds if and only if $ \frac{\la^2 -K_1\la-k\la}{K_1}<0 $ which is true on $(0,\la_{-})$,  since $\la_{-}<k$ by definition.
Moreover, it is clear that $H$ is an increasing function on $[0,L]$ for $\la \in (0,\la_{-})$. Then $H(x) \leq H(L) < 1 - \frac{\lambda}{k}$. This concludes the proof.
\end{proof}

\subsection{Proof of Theorem \ref{timeconvergence}}

Now we are ready to prove Theorem \ref{timeconvergence}.
We set $d_i(t,x):=|u_i(t,x)-\bar{u}_{i}(x)|$ $i=0,1,2$ and $\de_{j}:=|q_j(t,x)-\bar{q}_{j}(x)|$, $j=1,2$ with $\bar{u}_i, \bar{q}_i$ satisfying \eqref{stat} and $u_i, q_i$ solving \eqref{five}.

We subtract  component-wise  \eqref{five} to \eqref{stat}. Then  we multiply each of the entries by $sign(u_i-\bu_i)$ or $sign(q_j-\bq_j)$ respectively.
We obtain the following inequalities~:
\begin{equation}\label{inequalities}
\begin{cases}
a_1\p_t d_1+\alpha\p_x d_1\leq k(\de_1-d_1) \\
a_2\p_t d_2 - \alpha\p_x d_2\leq k(\de_2-d_2) \\
a_3\p_t \de_1\leq k (d_1-\de_1) +K_1 (d_0-\de_1) \\
a_4\p_t \de_2\leq k(\de_2-d_2)-\hat{G} \\
a_0\p_t d_0\leq K_1(\de_1-d_0) + \hat{G},
\end{cases}
\end{equation}
with $\hat{G}:=|G(q_2)-G(\bq_2)|$.
We have used also the monotonicity of $G$ (see \eqref{nonlin}).
We set
$$
M(t):=\int_{0}^{L} (a_1 d_1 \ph_1 +a_2 d_2 \ph_2 +a_3 \de_1 \phi_1 +a_4 \de_2 \phi_2 + a_0 d_0 \ph_0 )\,dx.
$$

Multiplying each equation of \eqref{inequalities} by the corresponding dual function $\ph_i, \phi_i$, adding all equations and integrating with respect to $x$, we obtain~:
\begin{multline*}
\frac{d}{dt} M(t) \leq \int_{0}^{L} \Bigl(k(\de_1-d_1)\ph_1+k (\de_2-d_2)\ph_2 + k (d_1-\de_1)\phi_1 +K_1 (d_0-\de_1) \phi_1\\ 
+k(\de_2-d_2)\phi_2 -\hat{G} \phi_2  
+ K_1(\de_1-d_0)\ph_0 + \hat{G} \ph_0 \Bigr)\ dx + \alpha\int_{0}^{L} (\p_{x} d_2 \ph_2-\p_x d_1 \ph_1)\,dx .
\end{multline*}
Integrating by parts the last integral and using  the dual system \eqref{dualc}, 
we can simplify the latter inequality into
\begin{multline*}
\frac{d}{dt} M(t) \leq -\lambda \int_{0}^{L} (d_1 \ph_1 +d_2 \ph_2 + \de_1 \phi_1 + \de_2 \phi_2 + d_0 \ph_0 )\ dx   \\
+d_2(L)\ph_2(L)-d_1(L)\ph_1(L)-d_2(0)\ph_2(0)+d_1(0)\ph_1(0)+ \int_{0}^{L} (g\de_2 -\hat{G})(\phi_2-\ph_0)\ dx. 
\end{multline*}
Using the normalization conditions in \eqref{stabcond}  and in  \eqref{dualccondition}, we obtain
$$
\frac{d}{dt} M(t)\leq \frac{-\lambda}{\max\{ a_1, a_2, a_3, a_4, a_0 \}} M(t) +d_1(t,0)\ph_1(0)+ \int_{0}^{L} (g \de_2-\hat{G})(\phi_{2}-\ph_0)\ dx.
$$ 
To simplify the notation we set $\bar{\lambda}=\frac{-\lambda}{\max\{ a_1, a_2, a_3, a_4, a_0 \}}$.
Since $G$ is  Lipschitz-continuous  and by assumption \eqref{nonlin},  $\hat{G} \leq g \de_2$ with $g=\|G'\|_{\infty}$.
With this choice of $g$, we apply Proposition \ref{prop} and deduce that the quantity $(\phi_2-\ph_0)$ is negative.
Then,
$$
\frac{d}{dt} M(t)+\bar{\lambda} M(t) \leq d_{1}(t,0)\ph_1(0).
$$
Thanks to \eqref{boundcond} and applying Gronwall's lemma, we conclude that
%
\begin{equation}
  \label{eq:M1}
  M(t)\leq M(0)e^{-\bar{\la} t} + \ph_1(0)\int_{0}^{t} d_1(s,0)e^{\bar{\la} (s-t)}\,ds.  
\end{equation}
Moreover, from \eqref{boundcond}, we have
$d_1(s,0)=|u_b(t)-\bu_b|\rightarrow 0 $ as $t \rightarrow +\infty$.
Then, for every $\eps > 0$, it exists $\bar{t}>0$ such that $d_1(s,0)<\eps$ for each $s>\bar{t}$.
Then for every $t\geq \bar{t}$, we have
\begin{align*}
  \int_{0}^{t} d_1(s,0)e^{\bar{\la} (s-t)}\ ds & \leq \int_{0}^{\bar{t}} d_1(s,0)e^{\bar{\la}(s-t)}\ ds + \eps \int_{\bar{t}}^{t} e^{\bar{\la}(s-t)}\,ds \\
  & \leq e^{\bar{\la} (\bar{t} - t)}\int_{0}^{\bar{t}} d_{1}(s,0)\ ds + \frac{\eps}{\bar{\la}}.
\end{align*}
The first term of the right hand side is arbitrarily small at $t$ goes to $+\infty$.
Hence, we have proved that for any $\eps>0$ there exists $\tau$ large enough such that for every $t\geq \tau$,
$$
M(t)\leq M(0)e^{-\bar{\la} t} + C\eps.
$$
Since $M(t) = \|\mathbf{u}(t)-\bar{\mathbf{u}}\|_{L^1(\Phi)}$, it proves the convergence as stated in Theorem \ref{timeconvergence}.

Finally, if we assume that there exist positive constants $\mu_0$ and $C_0$ such that $|u_b(t)-\bu_b|\leq C_0 e^{-\mu_0 t}$, then from \eqref{eq:M1} we deduce
$$
M(t) \leq M(0) e^{-\bar{\la} t} + C_0 \ph_1(0) \frac{e^{-\mu_0 t} - e^{-\bar{\la} t}}{\bar{\la} - \mu_0} \leq C e^{-\min\{\bar{\la}\label{key},\mu_0\} t}.
$$
\qed

\section{Conclusion and outlook}

In this study we present a model describing the transport of sodium in a simplified version of the loop of Henle in a kidney nephron. 
From a modelling point of view, it seems important to take into account the epithelium in the counter-current tubular architecture since we observe that it may affect strongly the solute concentration profiles for a particular range of permeabilities.

The main limitation of the model is to not consider the re-absorption of water in descending limb. 
Indeed, in Section \ref{sec:stat}, we study the steady state solution and the assumption of a constant rate  $\alpha$ 
and the boundary conditions lead to $\bu_1(x)=\bu_2(x)$, 
i.e. the luminal concentrations of sodium are the same in both tubules for every $x\in(0,L)$. 
Conversely, in vivo, the concentrations in lumen 1 and 2 are different 
due to the constitutive differences between the segments and presence of membrane channel proteins, for example the aquaporins.
The thin descending limb of Henle's loop has low permeability to ions and urea, 
while being highly permeable to water. The thick ascending limb is impermeable to water, but it is permeable to ions.
For this reason, a possible extension of the model shall assume that $\alpha$ is not  constant  
but space-dependent. A first step could be, for instance,  to take two different values of $\alpha$ for the first and second equation of the model \eqref{five}, $\alpha_1$ and $\alpha_2$. From the mathematical viewpoint, this choice slightly changes the structure of the hyperbolic system~: 
for example,    conservation of certain quantities should not be that easy to prove. 

Furthermore, this assumption about $\alpha$ has a relevant influence on other factors. As already pointed out, the relation between $\bq_1$ and $\bu_0$ is biologically correct and consistent, this means that in vivo the concentration of Na$^{+}$ in the epithelial cell (intracellular) is lower than in interstitium.  
 The intracellular concentrations (epithelium,
$\bq_1$ and $\bq_2$) are usually of the order of $10$mM whereas the extracellular ones (therefore in the lumen and in the interstitium) are of the order of $140$mM, (see \cite{feher}, page 692).\\
There are also other types of source terms  in the interstitium that could be  added, accounting for blood vessels and/or collecting ducts. 
In this case, the last equation \eqref{fivee} of the dynamic system  should include a term that accounts for interstitium concentration storage or accumulation and for secretion-reabsorption  of water and solutes, 
but  the impact of adding such  complex mechanisms in the model remains to be assessed.

In this study, we focused our attention on the axial concentration gradient and the FIC, previously defined in Section \eqref{sec:stat}, which are 
significant factors in the urinary concentration mechanism, \cite{layton, laytonedwards}.
The axial gradient is an important determinant of urinary concentration capacity.
When water intake is limited, mammals can conserve water in body fluids by excreting solutes in a reduced volume of water, that is, by producing a concentrated urine. The thick ascending limb plays an essential role in urine concentration and dilution, \cite{sands}: the active reabsorption of sodium without parallel reabsorption of water generates an interstitial concentration gradient in the outer medulla that in turn drives water reabsorption by the collecting ducts, thereby regulating the concentration of final urine. \\
In summary, our model confirms that the active trans-epithelial transport of Na from the ascending limbs into the surrounding environment is able to generate an osmolality gradient.
Our model indicates that explicitly accounting for the 2-step transport across the epithelium significantly impacts the axial concentration gradient within the physiological range of parameters values considered here. Thus, representing the epithelial layer as two membrane in series, as opposed to a single-barrier representation, may provide a more accurate understanding of the forces that contribute to the urinary concentrating mechanism.



\appendix

\section{Large permeability asymptotic}\label{formalcomputation}
In this section we consider the case where the permeability between the lumen and the epithelium is large, 
i.e. when $P_i \rightarrow \infty $, with $i=1,2$ in the definition of constants $k_1$ and $k_2$.
For this purpose,  we set $k=k_1=k_2= \frac 1\eps$ and we let $\eps$ go to $0$.
Physically, this means fusing the epithelial layer with the lumen.

Rewriting   \eqref{five}  in this perspective gives
\begin{subequations}\label{fivea}
  \begin{equation}\label{fiveaa}
    \partial_{t}u_1^{\eps} + \alpha\partial_{x} u_1^{\eps} = \frac 1\eps (q_1^{\eps}-u_1^{\eps}) 
	\end{equation} 
	\begin{equation}\label{fiveba}
	\partial_{t}u_2^{\eps} - \alpha\partial_{x} u_2^{\eps} =  \frac 1\eps (q_2^{\eps}-u_2^{\eps}) 
	\end{equation} 
	\begin{equation}\label{fiveca}
	\partial_{t}q_1^{\eps} = \frac 1\eps (u_{1}^{\eps}-q_1^{\eps}) + K_1(u_{0}^{\eps}-q_1^{\eps}) 
	\end{equation} 
	\begin{equation}\label{fiveda}
	\partial_{t}q_2^{\eps} = \frac 1\eps (u_{2}^{\eps}-q_2^{\eps}) - G(q_2^{\eps})
	\end{equation} 
	\begin{equation}\label{fiveea}
	\partial_{t}u_0^{\eps} = K_1(q_1^{\eps}-u_0^{\eps})  + G(q_2^{\eps}) .
	\end{equation}
\end{subequations}
We expect the concentrations $u_1^{\eps}$ and $q_1^{\eps}$ to converge to the same quantity. 
The same happens for $u_2^{\eps} \rightarrow u_2$ and $q_2^{\eps} \rightarrow u_2$.
We denote $u_1$, respectively $u_2$, the limit of $u_1^\eps$ and $q_1^\eps$, 
respectively $u_2^\eps$ and $q_2^\eps$. 
Adding  \eqref{fiveaa} to \eqref{fiveca} 
and  \eqref{fiveba} to \eqref{fiveda}, we obtain 
\begin{align*}
\p_{t} u_1^{\eps}+ \p_{t}q_1^{\eps}+ \alpha\p_{x} u_1^{\eps}= &\ K_1(u_{0}^{\eps}-q_1^{\eps}) \\
\p_{t} u_2^{\eps}+ \p_{t}q_2^{\eps}- \alpha\p_{x} u_2^{\eps}= &\  - G(q_2^{\eps}).
\end{align*}
Passing formally to the limit $\eps \rightarrow 0$, we arrive at
\begin{align}
2\p_{t} u_1+ \alpha\p_{x} u_1= &\ K_1(u_{0}-u_1)   \label{systlim1} \\
2\p_{t} u_2-\alpha\p_{x} u_2 = &\ - G(u_2), \label{systlim2}
\end{align}
coupled to the equation for the concentration in the interstitium obtained by passing into the limit in equation \eqref{fiveea}
\begin{equation}\label{systlim3}
\p_{t} u_0=K_1(u_1-u_0)  + G(u_2).
\end{equation}
The equations \eqref{systlim1}, \eqref{systlim2}, \eqref{systlim3} describe the same concentration dynamics in a system without epithelium, previously studied in \cite{magali} and \cite{tesp}. 
The formal computation above shows that this $3\times 3$ system may be considered as a good approximation of the larger system \eqref{five} for large permeabilities.

Such a convergence result may be proved rigorously and it is investigated in \cite{MMV}. 
It relies on specific {\em a priori} estimates and the introduction of an initial layer.


%
%

%
%
%
%

\section{Technical results}

\subsection{Function $F(\la)$}\label{appendix:function}

In this subsection we prove the monotonicity of the function $F(\la)$ which appears in the proof of Lemma \ref{lemmadirect}.
First let's recall it
\begin{equation}\label{eqflambda}
  F(\la):=\frac{g k_{\la}}{ 1+\frac{1}{k}(g-\la) } \left( \frac{e^{(c_{\la}+\la-\eta(\la))L} - 1}{c_{\la } +\la -\eta(\la)}  \right).
\end{equation}
\begin{lem}
The function $F$ defined by \eqref{eqflambda} is monotonically increasing on $(0,\la_-)$.
\end{lem}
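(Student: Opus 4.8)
The plan is to write $F$ as a product of two positive, strictly increasing factors on $(0,\la_-)$ and then invoke the elementary fact that if $f,g>0$ are both increasing then $(fg)'=f'g+fg'>0$, so $fg$ is increasing. Concretely, set
$$A(\la):=\frac{g\,k_{\la}}{1+\frac1k(g-\la)}, \qquad B(\la):=\frac{e^{s(\la)L}-1}{s(\la)}, \qquad s(\la):=c_{\la}+\la-\eta(\la),$$
so that $F=A\cdot B$, and I would treat the two factors separately.

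For the first factor I would bring $k_\la$ to a single fraction, $k_{\la}=\frac{kK_1}{p(\la)}$ with $p(\la):=\la^2-(k+2K_1)\la+kK_1=(K_1-\la)\bigl(k-\la-\tfrac{K_1\la}{K_1-\la}\bigr)$, noting that the roots of $p$ are precisely $\la_\pm$. On $(0,\la_-)$ the upward parabola $p$ lies to the left of both its roots, hence $p>0$, and since its vertex $\tfrac{k+2K_1}{2}$ lies to the right of $\la_-$ we have $p'<0$ there, so $p$ is decreasing; therefore $k_\la$ is positive and strictly increasing. The remaining factor equals $\frac{gk}{k+g-\la}$, which is positive (because $\la<\la_-<k<k+g$) and strictly increasing since its denominator decreases in $\la$. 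Hence $A$ is positive and strictly increasing.

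For the second factor I would use the representation $\psi(s):=\frac{e^{sL}-1}{s}=\int_0^L e^{st}\,dt$, which shows that $\psi$ is smooth, strictly positive and strictly increasing on all of $\mathbb{R}$ (with $\psi(0)=L$). Since $B=\psi\circ s$, it remains only to prove that $s(\la)=c_\la+\la-\eta(\la)$ is increasing; then $B$ is a composition of increasing maps and is positive because $\psi>0$. Splitting $s$, the term $-\eta$ is handled by the substitution $w:=g-\la$: one has $\eta=\frac{kw}{k+w}$, which is increasing in $w$, while $w$ decreases in $\la$, so $-\eta$ increases. For $c_\la+\la$ a short simplification gives $c_\la+\la=2\la+k\,\frac{N(\la)}{p(\la)}$ with $N(\la):=\la(2K_1-\la)$, reducing the problem to the monotonicity of $N/p$.

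The main obstacle is this last monotonicity, i.e. the sign of $N'p-Np'$. This is the decisive computation; carrying it out I expect the cross terms to cancel and leave a manifestly positive expression, namely
$$N'(\la)\,p(\la)-N(\la)\,p'(\la)=k\bigl[(\la-K_1)^2+K_1^2\bigr]>0.$$
This yields that $N/p$, hence $c_\la+\la$, hence $s$, is strictly increasing, so $B$ is positive and strictly increasing. Combining with the first step, $F=AB$ is a product of positive strictly increasing functions on $(0,\la_-)$ and is therefore strictly increasing, as claimed. Apart from this perfect-square simplification, every step is routine positivity and monotonicity bookkeeping together with the integral representation of $\psi$.
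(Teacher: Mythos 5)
Your proof is correct and follows the same overall strategy as the paper's: decompose $F$ into a product of positive increasing factors ($k_\la$, the factor $\frac{gk}{k+g-\la}$, and $\psi(c_\la+\la-\eta(\la))$ with $\psi(s)=\frac{e^{sL}-1}{s}$), and check each one separately; your treatment of $k_\la$ via the parabola $p(\la)=\la^2-(k+2K_1)\la+kK_1$ with roots $\la_\pm$ is just a repackaging of the paper's derivative computation. The one genuinely different ingredient is how you establish that $s(\la)=c_\la+\la-\eta(\la)$ is increasing: the paper rewrites it algebraically as a sum of manifestly increasing terms, $2\la-2k+\frac{k^2}{\,k-\la-\frac{K_1\la}{K_1-\la}\,}+\frac{k^2}{k+g-\la}$ (the published version has a sign typo, $+2k$ instead of $-2k$, which is harmless since the term is constant), whereas you reduce it to the sign of $N'p-Np'$ and obtain the identity $N'p-Np'=k\left[(\la-K_1)^2+K_1^2\right]$, which I have verified; both routes are elementary and equally short. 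One point where your version is actually tighter than the paper's: the paper only invokes the monotonicity of $x\mapsto\frac{e^x-1}{x}$ on $\mathbb{R}^+$, but $s(0)=-\eta(0)=-\frac{kg}{k+g}<0$, so the argument of this function is negative near $\la=0$; your integral representation $\psi(s)=\int_0^L e^{st}\,dt$ gives positivity and strict monotonicity on all of $\mathbb{R}$ and thus closes that small gap.
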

\begin{proof}
 The product of positive increasing functions is increasing.
  \begin{itemize}
  \item $\la\mapsto k_{\la} = \frac{K_1 k}{\la^2-2K_1\la - k \la  +k K_1}$ is a positive and increasing function if $\la \in (0,\la_{-})$.
    Indeed $\frac{\p k_\la}{\p \la}=\frac{-2\la k K_1+2k K_1^2  +k^2K_1}{(\la^2-2K_1\la - k \la  +k K_1)^2}$
    is positive for $0<\la<K_1+\frac{k}{2}$ and $\la_{-}<K_1 $ by definition.
  \item We set $f_1(\la):=\frac{g}{1+\frac{1}{k}(g-\la)}$; if $\la< g+k $ the function $f_1$ is positive since $g>0$ by hypothesis and it is also increasing since $\frac{\p}{\p_{\la}} f_1(\la)= \frac{\frac{g}{k}}{(1+\frac{1}{k}(g(y)-\la))^2}> 0$, and $\lambda_- \leq \frac{k}{2}$.
  \item The function $x\mapsto \frac{e^x-1}{x}$ is increasing on $\mathbb{R^+}$ and the function $\lambda\mapsto c_\la +\la - \eta(\la)$ is increasing on $(0,\lambda_1)$.
    Indeed, we have straightforwardly
    $$
    c_\la +\la - \eta(\la) = 2 \la + 2 k + \frac{k^2}{k-\la- \frac{K_1\la}{K_1-\la}} + \frac{k^2}{k+g-\la}.
    $$
\end{itemize}
\end{proof}

\subsection{Relation between direct and dual system}\label{app.B2}
We recall the eigenelements problem written as below:
\begin{equation}\label{relation1}
 \begin{bmatrix} 
 \p_x U_1(x) \\ -\p_x U_2(x) \\ 0 \\ 0 \\ 0 \\
 \end{bmatrix} = \la \mathcal{U}(x)+A\mathcal{U}(x);
  \quad\quad \mathcal{U}(x)=\begin{bmatrix} 
U_1 \\ U_2 \\ Q_1 \\ Q_2 \\ U_0
\end{bmatrix} 
\end{equation}

\begin{equation}\label{eqeigen}
\begin{bmatrix} 
-\p_x \ph_1(x) \\ \p_x \ph_2(x) \\ 0 \\ 0 \\ 0
\end{bmatrix} = \la \Phi(x)+ {}^tA\Phi(x);
\quad\quad \Phi(x)=\begin{bmatrix} 
\ph_1 \\ \ph_2 \\ \phi_1  \\ \phi_2 \\ \ph_0
\end{bmatrix} 
\end{equation}
with related matrix defined by
$$
A=\begin{bmatrix}
-k & 0 & k & 0 & 0 \\
0  & -k & 0 & k & 0 \\
k & 0 & -k-K_1 & 0 & K_1 \\
0 & k & 0 & -k-g & 0 \\
0 & 0 & K_1 & g & -K_1 
\end{bmatrix}.
$$
Multiplying \eqref{relation1} on the left by ${}^t\Phi$, we deduce
$$
\ph_1 \p_x U_1 - \ph_2 \p_x U_2 = \lambda {}^t\Phi  \ \mathcal{U} + {}^t\Phi A \ \mathcal{U}.
$$
Taking the transpose of \eqref{eqeigen} and multiplying on the right by $\mathcal{U}$, we also have
$$
- \p_x \ph_1  U_1 + \p_x \ph_2  U_2 = \lambda {}^t\Phi \ \mathcal{U} + {}^t\Phi A \ \mathcal{U}.
$$
As a consequence, we deduce the relation
\begin{align}\label{relation}
 (U_1 \ph_1)'-(U_2 \ph_2)'=0,  \quad  \forall x \in [0,L]. 
\end{align}

 Since  $U_1(L)=U_2(L)$ in \eqref{stabcond} and by initial conditions $U_1(0)=0, \ \ph_2(0)=0$, then also $\ph_1(L)=\ph_2(L)$, as set in \eqref{dualccondition}. It means that $(U_1\ph_1)=(U_2\ph_2)  \ \forall x \in [0,L]$. Thanks to this relation, we can consider in our previous computation:
 $$\ph_2(x)=\frac{U_1(x)}{U_2(x)} \ph_1(x), \quad \forall x \in [0,L].  $$

\bigskip

\footnotesize{
 
}

\end{document}